\newtheorem{thm}{Theorem}[section]
\newtheorem{cor}[thm]{Corollary}
\newtheorem{lemma}[thm]{Lemma}
\newtheorem{prop}[thm]{Proposition}
\newtheorem{defn}[thm]{ \bf{Definition}}
\theoremstyle{remark}
\newtheorem{rem}[thm]{Remark}
\newtheorem{exa}[thm]{Example}
\makeatletter \renewenvironment{proof}[1][Proof]
{\par\pushQED{\qed}\normalfont\topsep6\p@\@plus6\p@\relax\trivlist\item[\hskip\labelsep\bfseries#1\@addpunct{.}]\ignorespaces}{\popQED\endtrivlist\@endpefalse} \makeatother
\newcommand{\Cas}[1]{\begin{cases*} #1 \end{cases*}}
\def\ncase#1{\begin{align*}
		\left\{\begin{aligned}#1\end{aligned}\right.
	\end{align*}	}	
\newcommand{\Del}[1]{}
\newcommand{\norm}[1]{\left\|#1\right\|}		
\newcommand{\abs}[1]{\left|#1\right|}			
\newcommand{\set}[1]{\left\{#1\right\}}			
\newcommand{\inner}[1]{\left\langle#1\right\rangle} 
\newcommand{\rmnum}[1]{\uppercase\expandafter{\romannumeral#1}}  
\def\norm#1{\left\|#1\right\|}
\def\abs#1{\left|#1\right|}
\def\brk#1{\left(#1\right)}			
\def\pd{\partial}						
\def\sch{\mathscr{S}}
\def\mpq{M_{p,q}}
\def\mtwone{\mathscr{M}_{2,1}}	 	      
\def\mtwonemu{\mathscr{M}_{2,1}^{\mu}}
\def\mpqmu{\mathscr{M}_{p,q}^{\mu}}
\def\mpqj{M_{p,q}^{[j]}}
\def\mpqinfty{\mathfrak{M}_{p,q,\infty}^{\rho}}
\def\eps{\varepsilon}
\def\leq{\leqslant}
\def\geq{\geqslant}
\newcommand{\N}{{\mathbb N}}			
\newcommand{\C}{{\mathbb C}}
\newcommand{\Z}{{\mathbb Z}}
\DeclareMathOperator*{\esssup}{ess\,sup}
\newcommand{\supp}{{\mbox{supp}\ }}
\def\al{\alpha}
\def\th{\theta}
\def\si{\sigma}
\def\de{\delta}
\def\la{\lambda}
\def\ga{\gamma}
\def\real{{\mathbb{R}}}			
\def\rev#1{\frac{1}{#1}}
\def\FF{{\mathscr{F}^{-1}}}		
\def\F{{\mathscr{F}}}			
\numberwithin{equation}{section}
\begin{document}
\title[Well-posedness of KdV and d-4NLS in $\mtwonemu$]{Well-posedness of  generalized KdV and one-dimensional fourth-order derivative nonlinear Schr\"odinger  equations for data with an infinite $L^2$ norm}


\author{Yufeng Lu}
\address{School of Mathematical Sciences,
	Peking University,
	Beijing 100871, China}
\thanks{}


\subjclass[2010]{35Q55,35G25,42B35,42B37.}
\keywords{Fourth-order nonlinear Schr\"odinger equation; KdV; Well-posedness; Scaling limit of modulation space.}

\date{}

\begin{abstract}\noindent 
We study the Cauchy problem for the generalized KdV and one-dimensional fourth-order derivative nonlinear Schr\"odinger  equations, for which the global well-posedness of solutions with the small rough data in certain scaling limit of modulation spaces $\mtwonemu$ is shown, which contain some data with infinite $L^{2}$ norm.
\end{abstract}

\maketitle


\section{Introduction}

In this paper, We consider the generalized KdV and fourth-order derivative nonlinear Schr\"odinger  equations on real line as follows. 
\begin{align}
	\label{eq-d4nls}&i\pd_{t}u + \pd_{x}^{4}u = \lambda\pd_{x} \brk{u^{m+1}}, \  u(0,x) = u_{0}(x),\\
	\label{eq-kdv}&\pd_{t} u + \pd_{x}^{3} u = \lambda \pd_{x} (u^{m+1}), \ \ \ u(0,x)=u_{0}(x).
\end{align}
Here $u(t,x)$ is a complex-value function of $(t,x) \in \real^{1+1}, i= \sqrt{-1}, \la \in \C, \pd_{t}=\pd/\pd t, \pd_{x} = \pd/\pd x, \pd_{x}^{n} = \pd^{n}/\pd x^{n}$ for $n=3,4$. $m$ is an integer. We will study the global well-posedness of \eqref{eq-d4nls} and \eqref{eq-kdv} with small rough data in scaling limit of modulation spaces $\mtwonemu$.

The modulation space $\mpq^{s}$ is one of the function spaces, introduced by Feichtinger \cite{Feichtinger2003Modulation} in the 1980s using the short-time Fourier transform to measure the decay and the regularity of the function differently from the usual $L^{p}$ Sobolev spaces or Besov-Triebel spaces. Roughly speaking,  Besov-Triebel spaces mostly use the dyadic decompositions of the frequency space, while the modulation spaces use the uniform decompositions. Modulation spaces also have many applications in the analysis of partial differential equations. For example, the Schr\"odinger and wave semigroups, which are not bounded on neither $L^{p}$ nor $B_{p,q}^{s}$ for $p\neq 2$, are bounded on $\mpq^{s}$  (see \cite{Benyi2007Unimodular}). So, the modulation space is a good space for the initial data of the Cauchy problem for nonlinear dispersive equations (see \cite{Baoxiang2006Isometric,Wang2007global,Wang2013Globally,Nicola2014Phase,Kato2017Well,Oh2021global}). However, Sugimoto and Tomita in \cite{Sugimoto2007dilation} got the sharp scaling property of $\mpq$. Their result showed that the modulation spaces did not have good scaling properties like $L^{p}$ spaces. Therefore, in order to obtain some kinds of modulation spaces with good scaling property,  B$\acute{e}$nyi and Oh in \cite{Benyi2020Modulation}, also Sugimoto and  Wang in \cite{Sugimoto2021Scaling} introduced the scaling limit of modulation spaces like $\mpqinfty$ and $\mpqmu$. They studied the basic properties of these spaces such as the scaling property, the dual space, and the algebraic property. And they got some applications in NLS. In this paper, we obtain some other applications of the scaling limit of modulation spaces in nonlinear dispersive equations.

Our main results are as follows.

\begin{thm}
	\label{thm-GWP}
	Let $m\geq 8, p= m/2, A= \frac{3(p-4)}{2(3p+2)}, 0\leq \mu \leq \frac{3(m-8)}{2(3m+4)}$. Then there exists $\eps>0$, such that for any $u_{0} \in \mtwonemu$, with $\norm{u_{0}}_{\mtwonemu} \leq \eps$, then \eqref{eq-d4nls} has a unique global solution $u\in C(\real, \mtwonemu) \cap X^{\mu-A} (L_{x}^{p+2/3} L_{t}^{3p+2})$, where $X^{\mu-A} (L_{x}^{p+2/3} L_{t}^{3p+2})$ will be defined in Definition \ref{def-work-space}. Moreover, the data-to-solution map above is Lipschitz continuous.
\end{thm}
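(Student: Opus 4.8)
The plan is to realize the solution as a fixed point of the Duhamel map associated with \eqref{eq-d4nls}. Writing $U(t)=e^{it\pd_x^{4}}$ for the free propagator, a solution satisfies
\[
\Phi(u)(t) = U(t)u_{0} - i\lambda\int_{0}^{t} U(t-s)\,\pd_{x}\brk{u^{m+1}(s)}\,\rmd s,
\]
and I would run a contraction argument for $\Phi$ on a small ball of $C(\real,\mtwonemu)\cap X^{\mu-A}(L_{x}^{p+2/3}L_{t}^{3p+2})$. Since the working space is built from the frequency-uniform decomposition $\set{\square_{k}}_{k\in\Z}$, with the mixed space-time norm $L_{x}^{p+2/3}L_{t}^{3p+2}$ applied blockwise and a $\jb{k}$-weighted $\ell^{1}$ summation of order $\mu-A$, the whole argument reduces to two estimates: a linear mapping bound from $\mtwonemu$ into the working space, and a multilinear bound for the inhomogeneous term.

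First I would establish the frequency-localized dispersive estimates for $U(t)$ on a single block $\square_{k}$: a maximal-function bound, the sharp local smoothing estimate (which gains $3/2$ derivatives, matching the third-order group velocity $\xi^{3}$ of the fourth-order phase $\xi^{4}$), and a Strichartz-type bound. Interpolating among these yields the blockwise estimate
\[
\norm{\square_{k} U(t)f}_{L_{x}^{p+2/3}L_{t}^{3p+2}} \lsm \jb{k}^{A}\norm{\square_{k} f}_{L^{2}},
\]
in which $A=\frac{3(p-4)}{2(3p+2)}$ records the net derivative count of the mixed norm. Summing against the weight of order $\mu-A$ and invoking the definition of the $\mtwonemu$ norm then gives the linear estimate $\norm{U(t)u_{0}}_{X^{\mu-A}(L_{x}^{p+2/3}L_{t}^{3p+2})}\lsm\norm{u_{0}}_{\mtwonemu}$, while persistence in $C(\real,\mtwonemu)$ is routine since $U(t)$ acts unitarily on each block.

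The heart of the proof is the multilinear bound
\[
\normbb{\int_{0}^{t} U(t-s)\,\pd_{x}\brk{u^{m+1}(s)}\,\rmd s}_{X^{\mu-A}(L_{x}^{p+2/3}L_{t}^{3p+2})} \lsm \norm{u}_{X^{\mu-A}(L_{x}^{p+2/3}L_{t}^{3p+2})}^{m+1}.
\]
Here the dual (inhomogeneous) form of the smoothing estimate is what absorbs the single derivative $\pd_{x}$: the operator $F\mapsto\int_{0}^{t}U(t-s)\pd_{x}F\,\rmd s$ gains derivatives in the smoothing-dual norm, so on each output block one trades the factor $\jb{k}$ coming from $\pd_{x}$ against the gain. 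After this reduction I would expand $u^{m+1}=\prod_{j=1}^{m+1}u$, insert $\square_{k_{j}}$ into each factor, and apply H\"older in $(x,t)$ to distribute the $m+1$ factors; the choice $p=m/2$ is exactly what makes $m+1=2p+1$ copies of $L_{x}^{p+2/3}L_{t}^{3p+2}$ combine into the space dual to the smoothing output. The remaining $\ell^{1}$ summation over the output and input frequencies is controlled by the convolution structure of $\mtwonemu$, since $\square_{k}(u_{k_{1}}\cdots u_{k_{m+1}})\neq 0$ forces $k\approx k_{1}+\cdots+k_{m+1}$, and this weighted sum converges precisely when $0\le\mu\le\frac{3(m-8)}{2(3m+4)}=A$.

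Granting these two estimates, $\Phi$ maps a ball of radius $R\sim\eps$ into itself and contracts for $\eps$ small, producing a unique global solution; the associated difference estimate, of the same multilinear type, gives Lipschitz dependence of the data-to-solution map, and the $C(\real,\mtwonemu)$ continuity follows by applying the estimates on arbitrary time subintervals together with unitarity of $U(t)$. I expect the main obstacle to be the multilinear estimate above: one must simultaneously fix the sharp interpolation exponent $A$ so that exactly one $\pd_{x}$ is recovered, carry out the H\"older bookkeeping over $m+1$ factors without destroying the $\ell^{1}$-in-$k$ structure, and verify convergence of the weighted frequency summation, which is where the threshold $m\ge 8$ and the admissible range of $\mu$ are forced. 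Throughout, the scaling-limit weights of $\mtwonemu$ must be tracked carefully, as they differ from those of the ordinary modulation space $\mtwone$ and are precisely what allows data with infinite $L^{2}$ norm.
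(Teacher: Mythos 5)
Your top-level scheme (Duhamel fixed point, frequency-localized smoothing/maximal estimates for $e^{it\pd_x^4}$, interpolation to the mixed norm $L_x^{p+2/3}L_t^{3p+2}$, a dual smoothing estimate to absorb $\pd_x$, then H\"older for the $(m+1)$-fold product) is indeed the paper's scheme. But your model of the function spaces is wrong, and this is not cosmetic: it removes exactly the part of the argument that is specific to the scaling limit space $\mtwonemu$. By Definition \ref{def-work-space}, $X^{\mu-A}(L_x^{p+2/3}L_t^{3p+2})$ is \emph{not} a ``$\jb{k}$-weighted $\ell^1$ summation'' over the uniform blocks $\Box_k$: an element is decomposed as $u=\sum_{j\le0}u_j$ across dyadic \emph{scales}, each piece is measured through the rescaled blocks $\Box_{j,k}=\FF\sigma(2^{-j}\cdot-k)\F$ with an \emph{unweighted} $\ell^1_k$ sum, and the weight $2^{j\mu}$ sits on the scale index $j\le0$ (this is precisely what admits data of infinite $L^2$ norm, cf.\ Example \ref{exa:L2-infty}; a $\jb{k}$-weighted space would be a variant of $M_{2,1}^{s}$ and would embed into $L^2$). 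Correspondingly, your claimed block estimate with a $\jb{k}^{A}$ loss is spurious: interpolating the smoothing estimate (gain of $D^{3/2}$) against the maximal estimate (loss of $\dot H^{1/p}$, which is where $p\ge4$, i.e.\ $m\ge8$, enters) with $\al=s$ gives the \emph{$k$-uniform} bound $\norm{\Box_k W(t)u}_{L_x^{p+2/3}L_t^{3p+2}}\lesssim\norm{\Box_k u}_2$ (Proposition \ref{prop-homo}). The exponent $A$ is produced by the step your proposal has no counterpart for: a scaling lemma (Lemma \ref{lem-scaling}) transferring the unit-scale estimates to the blocks $\Box_{j,k}$, yielding $\norm{\Box_{j,k}W(t)u}_{L_x^{p+2/3}L_t^{3p+2}}\lesssim 2^{jA}\norm{\Box_{j,k}u}_2$ and the inhomogeneous analogue with gain $2^{jB}$, $B=\tfrac{2(p-4)}{3p+2}$; summing over $j\le0$ against $2^{j\mu}$ gives Proposition \ref{prop-linear-estimate}. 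The admissible range of $\mu$ then comes from the sign conditions $\mu-A\le0$ and $\mu-A+B\ge0$, which allow passage between $X^{\sigma}$-spaces via $X^{\sigma}\hookrightarrow X^{\sigma+\eps}$ --- not, as you assert, from convergence of a weighted sum over the convolution constraint $k\approx k_1+\cdots+k_{m+1}$; in the paper's product estimate (Proposition \ref{prop-nonlinear-estimate-general}) that orthogonality only collapses the unweighted $\ell^1_k$ sum.

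A second, smaller gap: your H\"older bookkeeping, with $m+1$ equal copies of $L_x^{p+2/3}L_t^{3p+2}$, does not close. The time exponents match ($\frac{m+1}{3p+2}=\frac{2p+1}{3p+2}$), but in space $\frac{m+1}{p+2/3}=\frac{6p+3}{3p+2}\neq\frac{3p+1}{3p+2}$, so equal splitting cannot reach the dual space $L_x^{(3p+2)/(3p+1)}L_t^{(3p+2)/(2p+1)}$. The paper instead splits asymmetrically (a few factors in $L_x^{p+2/3}$, some in $L_x^{2(3p+2)}$, the rest in $L_x^{\infty}$, all with $L_t^{3p+2}$) and then uses Bernstein-type embeddings between the $X$-spaces (Lemmas \ref{lem-Xl1-eembedding} and \ref{lem-Xmu-embedding}), which are harmless only because the scales satisfy $j\le0$. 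As written, your argument would at best prove a statement about a $\jb{k}$-weighted modulation space, not about $\mtwonemu$, so the theorem-specific content is missing.
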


\begin{rem}
	By the inclusion relation between modulation spaces and the scaling limit of modulation spaces (Proposition 3.2 in \cite{Chen2022embedding}), we know that when $0\leq \mu \leq 3(m-8)/(6m+8)$, we have  $M_{2,1} \hookrightarrow \mtwonemu \hookrightarrow M_{p_{0},1}$, where $p_{0}=(3m+4)/14$. Also, by Proposition \ref{prop-mpqmu-embed-lr}, when $\mu>0$, $\mtwonemu \hookrightarrow L^{2}$ is not true, which means that there exists some $u_{0}\in \mtwonemu$ with arbitrarily large $L^{2}$ norm, such that \eqref{eq-d4nls} is global well-posedness with initial date $u_{0}$. Moreover, by Example \ref{exa:L2-infty}, the $L^{2}$ norm of $u_{0}$ could even be infinite, which means that $\mtwonemu$ is $L^{2}$ supercritical.
\end{rem}

The fourth-order nonlinear Schr\"odinger equations (4NLS) have been studied by many authors. For equations without derivative on nonlinear term, the well-posedness theory in $H^{s}$ has been studied in \cite{Segata2007asymptotic,Seong2021Well,Pausader2009cubic,Wang2012Nonlinear}. Hayashi et al. \cite{Hayashi2016Scattering,Hayashi2016Asymptotics,Hayashi2021Scattering} studied the well-posedness and scattering theory in weighted Sobolev spaces. For equations with first order derivatives on nonlinear terms (d-4NLS) such as \eqref{eq-d4nls}, the scaling invariant homogeneous Sobolev space is $\dot{H}^{s_{c}}$, where $s_{c}=1/2-3/m$. That is to say, for any solution $u(t,x)$ of \eqref{eq-d4nls} with initial data $u_{0}$, the scaling function $u_{\la}(t,x)=\la^{3/m}u(\la^{4}t,\la x)$ is also a solution of \eqref{eq-d4nls} with initial data $u_{0,\la}(x)= \la^{3/m}u_{0}(\la x)$, and satisfies \begin{align*}
	\norm{u_{0,\la}}_{\dot{H}^{s_{c}}} = \norm{u_{0}}_{\dot{H}^{s_{c}}}.
\end{align*}
Wang in \cite{Wang2012Global} proved the global well-posedness of \eqref{eq-d4nls} with small initial data in $H^{s_{c}}$ when $m\geq 4$. The cases of $m=2,3$ and high dimensional cases of (d-4NLS) have been studied by Hirayama and Okamoto in \cite{Hirayama2016Well}. The well-posedness theory in weighted Sobolev spaces was studied by Hayashi et al. in \cite{Hayashi2015Large,Hayashi2015Global}. For equations with higher order derivatives on nonlinear terms, local well-posedness in $H^{s}, s\geq 1/2$ was obtained by Segata in \cite{Segata2003Well}. One can also refer to \cite{Huo2005Cauchy,Huo2007refined}. For the high-dimensional case, Hao et al. in \cite{Hao2007Well} proved local well-posedness in $H^{s}, s>d+2+1/2$. Ruzhansky et al. in \cite{Ruzhansky2016Global} obtained the global well-posedness in $M_{2,1}^{s}$ for $s\geq 3+1/m$.


For the generalized KdV equation \eqref{eq-kdv}, our main result is as follows.

\begin{thm}\label{thm-kdv}
	Let $m\geq 4,A=(m-4)/(2m+2), 0\leq \mu \leq A$. Then there exists $\eps>0$ such that for any $u_{0}\in \mtwonemu$ with $\norm{u_{0}}_{\mtwonemu} \leq \eps$, then \eqref{eq-kdv} has a unique global solution $u\in C(\real,\mtwonemu)\cap X^{\mu-A}(L_{x}^{m+1}L_{t}^{2(m+1)})$. Moreover, the data-to-solution map above is Lipschitz continuous.
\end{thm}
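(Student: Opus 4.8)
The plan is to solve \eqref{eq-kdv} by a contraction-mapping argument on the Duhamel formulation, in complete analogy with the proof of Theorem~\ref{thm-GWP} but with the fourth-order dispersion replaced by the Airy propagator $S(t)=\FF e^{it\xi^{3}}\F$. Abbreviating $X:=X^{\mu-A}(L_{x}^{m+1}L_{t}^{2(m+1)})$, I would recast the equation as the fixed-point problem
\begin{equation*}
u(t)=S(t)u_{0}+\la\int_{0}^{t}S(t-s)\,\pd_{x}\brk{u^{m+1}}(s)\,ds=:\Phi(u),
\end{equation*}
and look for $u$ in a ball of radius comparable to $\eps$ inside $C(\real,\mtwonemu)\cap X$.

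Three linear facts drive the argument. Since $e^{it\xi^{3}}$ is a smooth unimodular multiplier that is uniformly controlled on each unit frequency block, $S(t)$ is bounded on $\mtwonemu$ uniformly in $t\in\real$, which gives the homogeneous part of the $C(\real,\mtwonemu)$ bound and, via $S(t)$ being essentially an isometry on the uniform decomposition, keeps the solution in the data space for all time. Next, a frequency-localized local-smoothing/Strichartz estimate for the Airy group---schematically $\norm{D_{x}^{A}S(t)P_{k}f}_{L_{x}^{m+1}L_{t}^{2(m+1)}}\lesssim\norm{P_{k}f}_{L^{2}}$ on each uniform block $P_{k}$---summed in $\ell^{1}$ over blocks and weighted correctly across scales yields $\norm{S(t)u_{0}}_{X}\lesssim\norm{u_{0}}_{\mtwonemu}$; here the pair $(m+1,2(m+1))$ and the gain $A=(m-4)/(2m+2)$ are exactly the admissible data for which the third-order smoothing of $S(t)$ absorbs the frequency weight $\jb{k}^{A}$ encoded in the index $\mu-A$. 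Finally, the retarded (dual) form of the same estimate controls the Duhamel integral simultaneously in the $X$-norm and in the $\mtwonemu$-norm.

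The nonlinear step is a multilinear Hölder estimate in the mixed space: writing $u^{m+1}$ as a product of $m+1$ factors in $L_{x}^{m+1}L_{t}^{2(m+1)}$ places it in the Lebesgue space dual to the one in which the retarded estimate acts, so that the derivative $\pd_{x}$ in the nonlinearity is recovered from the smoothing gain $A$. This delivers $\norm{\Phi(u)-S(t)u_{0}}_{X}\lesssim\norm{u}_{X}^{m+1}$ together with the analogous bound in $\mtwonemu$. Combining with the linear bounds, $\Phi$ maps the small ball into itself and is a contraction there once $\norm{u_{0}}_{\mtwonemu}\leq\eps$ with $\eps$ small, producing the unique global solution; applying the same estimates to $\Phi(u)-\Phi(v)$ and to $u_{0}-v_{0}$ gives the Lipschitz continuity of the data-to-solution map.

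The crux is the nonlinear estimate with the derivative loss. The factor $\pd_{x}$ must be paid for by the local-smoothing effect of the Airy group, and---because $\mtwonemu$ is a scaling limit of modulation spaces---this recovery must be uniform across all frequency scales rather than at a single scale. The hypotheses $m\geq 4$ and $0\leq\mu\leq A$ serve precisely to keep the net index $\mu-A$ in the range where the $\ell^{1}$ block-summation and the multilinear Hölder estimate close at the same time; checking this balance, exactly as in the fourth-order case of Theorem~\ref{thm-GWP}, is where the genuine work lies, while the fixed-point mechanism itself is routine.
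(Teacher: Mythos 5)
Your proposal is correct and is essentially the paper's own approach: the paper gives no separate argument for Theorem~\ref{thm-kdv}, remarking only that the proof is ``similar to the proof of Theorem~\ref{thm-GWP},'' and your outline --- Duhamel fixed point in $X^{\mu-A}(L_{x}^{m+1}L_{t}^{2(m+1)})$, frequency-localized Kenig--Ponce--Vega smoothing/maximal estimates for the Airy group transferred to the blocks $\Box_{j,k}$ by scaling, the dual/Christ--Kiselev estimate for the retarded term, multilinear H\"older placing $u^{m+1}$ in $L_{x}^{1}L_{t}^{2}$, and contraction together with the blockwise $L^{2}$-isometry of $S(t)$ for the $C(\real,\mtwonemu)$ bound --- is exactly that adaptation, with the correct exponent pair and the correct value of $A$. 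The one point to record precisely is that the gain $A=(m-4)/(2m+2)$ enters not as a per-block weight $\jb{k}^{A}$ (as written, your schematic estimate with $D_{x}^{A}$ fails for unit blocks at high center frequency) but as the scaling factor $2^{jA}$ attached to blocks of side $2^{j}\leq 1$ uniformly in their centers $k$, exactly as in Lemma~\ref{lem-scaling} and Proposition~\ref{prop-boxjkWtAf}, and it is this factor that cancels the $2^{-jA}$ built into the index $\mu-A$.
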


\begin{rem}
	By the inclusion between modulation spaces and the scaling limit of modulation spaces, we have $M_{2,1} \hookrightarrow \mtwonemu \hookrightarrow M_{2(m+1)/5,1}$. Therefore, Theorem \ref{thm-kdv} is a generalization of the corresponding result of Theorem 1.2 in \cite{Wang2007Frequency}. Meanwhile, the proof of Theorem \ref{thm-kdv} is similar to the proof of Theorem \ref{thm-GWP}. we omit it for simplicity.
\end{rem}

By the same scaling argument as for \eqref{eq-d4nls}, we know that the critical Sobolev spaces of \eqref{eq-kdv} is $\dot{H}^{s_{c}}, s_{c}=1/2-2/m$. Recall that Kenig et al. in \cite{Kenig1993Well} showed the global well-posedness of \eqref{eq-kdv} with small initial data in $\dot{H}^{s_{c}}$ when $m\geq 4$. Later, they proved the ill-posedness of focusing KdV in $H^{s}$ with $s<s_{c}$ in \cite{Kenig2001ill}. Molinet and Ribaud in \cite{Molinet2003Cauchy}, extended the global well-posedness result in $\dot{B}_{2,\infty}^{s_{c}}$. Wang and Huang in \cite{Wang2007Frequency} proved the global well-posedness of \eqref{eq-kdv} in $M_{2,1}$, which was the case of $\mu=0$ in Theorem \ref{thm-kdv}. For the cases of $m=1,2,3$, many authors have been studied on it. One can refer to \cite{Colliander2003Sharp,HarropGriffiths2020Sharp,Killip2018KdV,Oh2021global}.

The paper is organized as follows. In Section \ref{sec:pre}, we will give notations and definitions of some function spaces, including the scaling limit of modulation spaces. In Section \ref{sec: inclusion}, we will give some characterization of scaling limit of modulation spaces and (Fourier) Lebesgue spaces. The proof of our main result will be given in Sections \ref{sec:linear}-\ref{sec:proof}.

\vskip 1.5cm

\section{Preliminary}\label{sec:pre}

\vskip .5cm
\subsection{Notation}

We write $\sch(\real^{d})$ to denote the Schwartz space of all complex-valued rapidly decreasing infinity differentiable functions on $\real^{d}$, and $\sch'(\real^{d})$ to denote the dual space of $\sch(\real^{d})$, all called the space of all tempered distributions. For simplification, we omit $\real^{d}$ without causing ambiguity. The Fourier transform is defined by $\F f(\xi) = \hat{f}(\xi) = \int_{\real^{d}} f(x) e^{-ix\xi} d\xi$, and the inverse Fourier transform by $\FF f(x) = (2\pi)^{-d} \int_{\real^{d}}f(\xi) e^{ix\xi} d\xi$. For $1\leq p <\infty$, we define the $L^{p}$ norm:
\begin{align*}
	\norm{f}_{p} = 	\left( \int_{\real^{d}} \abs{f(x)}^{p}dx\right )^{1/p}
\end{align*}
and $\norm{f}_{\infty} = \esssup_{x\in \real^{d}} \abs{f(x)}$.

We use the notation $I \lesssim J$ if there is an independent constant C such that $I \leq C J$. Also, we denote $I \approx J$ if $I \lesssim J$ and $J\lesssim I$. For $1\leq p \leq \infty$, we denote the dual index $p'$ with $1/p+1/p'=1$. Let $W(t)= e^{it \pd_{x}^{4}}= \FF e^{it \xi^{4}} \F $, the fourth-order Schr\"odinger semigroup; $\mathscr{A}f= \int_{0}^{t} W(t-s) f(s) ds.$

\begin{defn}
	\label{def-work-space}
	Let $(A,\norm{ \cdot}_{A})$ be a Banach space. For $\mu \in \real$, denote \begin{align*}
		X^{\mu}(A):= \set{u \in \sch' (\real^{2}): u = \sum_{j\leq 0} u_{j}, \mbox{ with } \sum_{j\leq 0} 2^{j\mu} \sum_{k\in \Z} \norm{\Box_{j,k} u_{j}}_{A} < \infty},
	\end{align*}
where the norm of this space is as follows.
\begin{align*}
	\norm{u}_{X^{\mu} (A)} := \inf \sum_{j\leq 0} 2^{j\mu} \sum_{k\in \Z} \norm{\Box_{j,k} u_{j}}_{A},
\end{align*}
where the infimum is taken over all the decompositions of $u=\sum_{j\leq 0}u_{j} \in X^{\mu}(A)$.
\end{defn}

\begin{rem}
	\label{rm-Xmu-embed}
	By the definition above, we could easily obtain $X^{\mu} (A) \hookrightarrow X^{\mu+\eps} (A) $ for any $\eps>0$.
\end{rem}

\subsection{Modulation spaces and scaling limit of modulation spaces}
Recall that the short time Fourier transform of $f$ respect to a window function $g\in \sch$ is defined as (see \cite{Feichtinger2003Modulation}):
\begin{align*}
	V_{g}f(x,\xi) = \int_{\real^{d}} f(t) \overline{g(t-x)} e^{-it\xi} dt.
\end{align*}
And then, for $1\leq p,q\leq \infty$, we denote 
\begin{align*}
	\norm{f}_{\mpq} = \norm{\norm{V_{g}f(x,\xi)}_{L_{x}^{p}}}_{L_{\xi}^{q}} := \norm{V_{g}f(x,\xi)}_{L_{\xi}^{q}L_{x}^{p}}.
\end{align*}
The modulation space $\mpq$ is defined as the space of all tempered distributions $f\in 
\sch'$ for which $\norm{f}_{\mpq}$ is finite. 

Also, we know another equivalent definition of modulation spaces by uniform decomposition of the frequency space (see \cite{Groechenig2001Foundations,Benyi[2020]copyright2020Modulation}).

Let $\sigma$ be a smooth cut-off function adapted to the unit cube $[-1/2,1/2]^{d}$ and $\sigma=0$ outside the cube $[-3/4,3/4]^{d}$, we write $\sigma_{k} = \sigma(\cdot -k)$, and assume that 
\begin{align*}
	\sum_{k\in\Z^{d}} \sigma_{k} (\xi) \equiv 1, \ \forall \xi \in \real^{d}.
\end{align*}

Denote $\sigma_{k} (\xi) =\sigma (\xi-k)$, and $\Box_{k} = \FF \sigma_{k} \F $, then we have the following equivalent norm of modulation space:

\begin{align*}
	\norm{f}_{\mpq} = \norm{\norm{\Box_{k}f}_{L_{x}^p}}_{\ell_{k}^{q} (\Z^{d})}.
\end{align*}

Then let us recall the definition of the scaling limit of modulation spaces (see \cite{Sugimoto2021Scaling}). Denote $\sigma_{j,k}(\xi) =\sigma(2^{-j} \xi  -k)$, and $\Box_{j,k}=\FF\sigma_{j,k}\F$, then we denote
\begin{equation}
	\norm{f}_{\mpq^{[j]}}:= \norm{\norm{\Box_{j,k}f}_{L_{x}^p}}_{\ell_{k}^{q} (\Z^{d})}.
\end{equation}
The scaling limit of modulation spaces $\mpqmu$ are defined as follows.
\begin{align*}
	\mpqmu = \set{ f\in \sch': \exists f_{j} \in \mpqj \mbox{ such that } f= \sum_{j\leq 0} f_{j}, \sum_{j\leq 0} 2^{j\mu} \norm{f_{j}}_{\mpqj} <\infty}
\end{align*}
and the norm on $\mpqmu$ is defined as \begin{align*}
	\norm{f}_{\mpqmu} = \inf \sum_{j\leq 0} 2^{j\mu} \norm{f_{j}}_{\mpqj},
\end{align*}
where the infimum is taken over all the decompositions of $f=\sum_{j\leq 0} f_{j} \in \mpqmu.$

\begin{rem}
	\label{rm-mpqmu-mpq}
	By the trivial decomposition $f=f+0+\cdots$, we know $\mpq \hookrightarrow \mpqmu$. For this reason, we could regard $\mpqmu$ as an extension of $\mpq$.
\end{rem}

\subsection{Useful lemmas}
In this subsection, we gather some useful results.

\begin{lemma}[Bernstein's inequality, Lemma 6.1 in \cite{Wang2011Harmonic}]
	\label{lem-bernstein}
	Let $1\leq p \leq q \leq \infty, R>0,\xi_{0} \in \real^{d}$. Denote $$L_{B(\xi_{0},R)}^{p} = \set{ f\in L^{p}: \mbox{ supp } \hat{f} \subseteq B(\xi_{0},R)}.$$ Then there exists $C>0$, such that \begin{align*}
		\norm{f}_{q} \leq C R^{d(1/p-1/q)} \norm{f}_{p}
	\end{align*} 
	hold for all $f\in L_{B(\xi_{0},R)}^{p}$ and $C$ is independent of $R>0$ and $\xi_{0} \in \real^{d}$.
\end{lemma}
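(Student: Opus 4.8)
The plan is to prove the estimate by two normalizing reductions followed by a convolution argument. The structural facts I would exploit are that the $L^p$ norms are invariant under modulation by a unimodular character and transform homogeneously under dilation, while the Fourier-support hypothesis $\supp \hat f \subseteq B(\xi_0, R)$ is respected by both operations. These two symmetries reduce the claim to the model case of a function whose Fourier transform is supported in the unit ball centered at the origin, where the inequality follows from a reproducing identity and Young's convolution inequality.

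First I would remove the center $\xi_0$. Setting $g(x) = e^{-ix\xi_0} f(x)$, one has $\hat g(\xi) = \hat f(\xi + \xi_0)$, so $\supp \hat g \subseteq B(0,R)$, while $\norm{g}_s = \norm{f}_s$ for every exponent $s$ since $\abs{e^{-ix\xi_0}} = 1$. Thus it suffices to treat $\xi_0 = 0$. Next I would rescale to $R = 1$. Writing $h(x) = g(x/R)$, a change of variables gives $\hat h(\xi) = R^d \hat g(R\xi)$, so $\supp \hat h \subseteq B(0,1)$, and $\norm{h}_s = R^{d/s}\norm{g}_s$ for any $s$. Hence, after multiplying through by the appropriate power of $R$, the desired inequality for $g$ is equivalent to the unit-ball inequality $\norm{h}_q \lsm \norm{h}_p$; tracking the factors $R^{d/q}$ and $R^{d/p}$ on the two sides produces exactly the claimed power $R^{d(1/p - 1/q)}$.

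It then remains to prove $\norm{h}_q \lsm \norm{h}_p$ for $h$ with $\supp \hat h \subseteq B(0,1)$, with a constant depending only on $d, p, q$. I would fix once and for all a function $\psi \in \sch$ with $\hat\psi \equiv 1$ on $B(0,1)$. Since $\hat h$ is supported in $B(0,1)$, we have $\hat h = \hat\psi \, \hat h$, whence $h = \psi * h$ by the convolution theorem. Young's inequality gives $\norm{h}_q = \norm{\psi * h}_q \leq \norm{\psi}_r \norm{h}_p$, where $1 + 1/q = 1/r + 1/p$. The hypothesis $p \leq q$ guarantees $0 \leq 1/r = 1 + 1/q - 1/p \leq 1$, so $r \in [1,\infty]$ is admissible, and $\norm{\psi}_r < \infty$ because $\psi \in \sch$. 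This yields the model estimate with $C = \norm{\psi}_r$.

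The argument is essentially routine, and the points requiring care are bookkeeping rather than genuine obstacles. The main one is to verify that the dilation exponents combine to the stated power $R^{d(1/p - 1/q)}$ and, crucially, that the resulting constant $C$ depends on neither $R$ nor $\xi_0$; this is automatic here because both reductions are exact symmetries and the model constant $\norm{\psi}_r$ is a fixed number. A secondary check is that the Young exponent $r$ lands in the valid range $[1,\infty]$, which is precisely where the hypothesis $p \leq q$ enters. One may also note that $f \in L^p$ with compactly supported Fourier transform is automatically smooth, so the reproducing identity $h = \psi * h$ holds pointwise and all manipulations are justified.
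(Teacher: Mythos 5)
Your proposal is correct, and it is essentially the standard argument matching the cited source: the paper gives no proof of this lemma at all (it quotes it as Lemma 6.1 of \cite{Wang2011Harmonic}), and the proof there is precisely your modulation--dilation reduction to the unit-ball case followed by the reproducing identity $h=\psi*h$ and Young's inequality, with the constant $\norm{\psi}_{r}$ manifestly independent of $R$ and $\xi_{0}$. One small refinement: choose $\hat{\psi}\equiv 1$ on a neighborhood of the closed ball $\overline{B(0,1)}$ (say on $B(0,2)$) rather than on $B(0,1)$ itself, since a distribution supported in a closed set annihilates only test functions vanishing on a \emph{neighborhood} of that set; with that choice the identity $\hat{h}=\hat{\psi}\,\hat{h}$ is fully justified for $\hat{h}$ a tempered distribution, and the rest of your argument goes through verbatim.
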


\begin{lemma}[Christ–Kiselev, Lemma 2 in \cite{Molinet2004Well}]
	\label{lem-christ}
	Let $T$ be a linear operator defined in space-time functions $f(t,x)$ by \begin{align*}
		Tf(t)= \int_{\real} K(t,s)f(s)ds,
	\end{align*}
such that \begin{align*}
	\norm{Tf}_{L_{x}^{p_{1}}L_{t}^{q_{1}}} \lesssim \norm{f}_{L_{x}^{p_{2}}L_{t}^{q_{2}}},
\end{align*}
where $p_{2}\vee q_{2} < p_{1}\wedge q_{1}$. Then 
\begin{align*}
	\norm{\int_{0}^{t} K(t,s) f(s)ds}_{L_{x}^{p_{1}}L_{t}^{q_{1}}} \lesssim \norm{f}_{L_{x}^{p_{2}}L_{t}^{q_{2}}}.
\end{align*}
\end{lemma}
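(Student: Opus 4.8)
The plan is to deduce the retarded (triangular) estimate from the a priori bound on the full operator $T$ by the Christ–Kiselev machinery. Write $Rf(t):=\int_{0}^{t}K(t,s)f(s)\,ds$. Since $\int_{0}^{t}=\int_{s<t}-\int_{s<\min(t,0)}$ up to sign, the contribution of the fixed endpoint $0$ is of the form $T(\mathbf{1}_{(-\infty,0)}f)$ and is already controlled by the hypothesis; so it suffices to bound the genuinely triangular operator associated with the kernel $\mathbf{1}_{\{s<t\}}K(t,s)$. I would first normalise $\norm{f}_{L_x^{p_2}L_t^{q_2}}=1$ and, by density, reduce to $f$ simple and compactly supported in $t$, which makes the cumulative time-profile of $f$ continuous.

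Next I would partition the time axis according to the mass of $f$. Introduce the nondecreasing function $\la(t)$ measuring the portion of the $L_x^{p_2}L_t^{q_2}$-mass of $f$ carried by $\{s\le t\}$, running continuously from $0$ to $1$, and for each dyadic level $n\ge 0$ use the level sets $\{\la=k2^{-n}\}$ to cut the time axis into $\sim 2^n$ consecutive intervals each carrying comparable mass $\sim 2^{-n}$. The Whitney decomposition of the open triangle $\{s<t\}$ subordinate to this partition yields, at each scale $n$, a family $\mathcal{W}_n$ of $\sim 2^n$ rectangles $I\times J$ with $J$ entirely to the left of $I$, the $I$'s pairwise disjoint in the output time and the $J$'s pairwise disjoint in the input time; on each rectangle the truncation is exact, so that $\mathbf{1}_{I}R(\mathbf{1}_{J}f)=\mathbf{1}_{I}T(\mathbf{1}_{J}f)$ and the hypothesis applies term by term. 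This gives
\begin{align*}
\norm{Rf}_{L_x^{p_1}L_t^{q_1}}\ \le\ \sum_{n\ge 0}\normbb{\sum_{(I,J)\in\mathcal{W}_n}\mathbf{1}_{I}\,T(\mathbf{1}_{J}f)}_{L_x^{p_1}L_t^{q_1}}.
\end{align*}

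For fixed $n$ I would exploit the disjointness of the $I$'s in $t$ to assemble the pieces in an $\ell^{q_1}$ manner in the output, bound each term by $\norm{T(\mathbf{1}_J f)}_{L_x^{p_1}L_t^{q_1}}\lesssim\norm{\mathbf{1}_J f}_{L_x^{p_2}L_t^{q_2}}$ through the hypothesis, and use the disjointness of the $J$'s together with the per-interval mass $\sim 2^{-n}$ and the count $\sim 2^{n}$. Tracking exponents, the net contribution of scale $n$ behaves like $2^{-n\delta}$ for some $\delta>0$, so summing the geometric series over $n$ closes the estimate. The main obstacle is precisely this mixed-norm bookkeeping: the required gain must be extracted simultaneously from the temporal and the spatial exponents, and the inner $L_t^{q_i}$ and outer $L_x^{p_i}$ norms must be recombined across the $\sim 2^n$ disjoint pieces in the correct order via Minkowski's inequality and the embeddings $\ell^{a}\hookrightarrow\ell^{b}$ for $a\le b$. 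This is exactly where the strict four-fold inequality $p_2\vee q_2<p_1\wedge q_1$ (not mere boundedness of $T$) is indispensable, since it is what forces $\delta>0$ in every ordering of the exponents and keeps the per-scale constant bounded; the remaining points (continuity of $\la$ after the density reduction and the covering property of $\mathcal{W}_n$) are routine.
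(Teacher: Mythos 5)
The paper never proves Lemma \ref{lem-christ}: it is imported as a black box from the cited work of Molinet and Ribaud \cite{Molinet2004Well} (who in turn adapt the Christ--Kiselev argument to mixed norms), and it is used only once, in Proposition \ref{prop-nonhomo}. So there is no in-paper proof to compare against; the benchmark is the literature-standard argument, and your sketch is exactly that argument. The mechanism you isolate is also the correct one: at scale $n$ the output pieces, disjoint in $t$, recombine in $\ell^{\alpha}$ with $\alpha=p_{1}\wedge q_{1}$ (Minkowski's inequality when $q_{1}\leq p_{1}$, the embedding $\ell^{p_{1}}\hookrightarrow\ell^{q_{1}}$ when $p_{1}\leq q_{1}$; note your ``assemble in an $\ell^{q_{1}}$ manner'' covers only the first case), the input pieces, disjoint in $s$, are accounted in $\ell^{\beta}$ with $\beta=p_{2}\vee q_{2}$, and the scale-$n$ total is $\lesssim (2^{n}\cdot 2^{-n\alpha/\beta})^{1/\alpha}=2^{-n\delta}$ with $\delta=1/\beta-1/\alpha$, which is positive precisely when $p_{2}\vee q_{2}<p_{1}\wedge q_{1}$.

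Two points would need repair in a full write-up. A minor one: the identity should read $\int_{0}^{t}=\int_{s<t}-\int_{s<0}$ for all $t$ (with $\min(t,0)$ as you wrote it, it fails for $t<0$); this is harmless, since the subtracted term is $T(\mathbf{1}_{(-\infty,0)}f)$ either way, which is what you actually use. The substantive one: you never specify what ``mass'' means for $L_{x}^{p_{2}}L_{t}^{q_{2}}$, where the filtration variable $t$ is the \emph{inner} variable and the norm is not additive over disjoint time intervals. The choice that makes your per-interval bound $\norm{\mathbf{1}_{J}f}\lesssim 2^{-n/\beta}$ correct is $\lambda(t)=\norm{\mathbf{1}_{(-\infty,t]}f}_{L_{x}^{p_{2}}L_{t}^{q_{2}}}^{\beta}$ with $\beta=p_{2}\vee q_{2}$, and one must verify that increments of $\lambda$ dominate $\norm{\mathbf{1}_{(a,b]}f}^{\beta}$: for $p_{2}\geq q_{2}$ this follows from the pointwise superadditivity of $u\mapsto u^{p_{2}/q_{2}}$, while for $q_{2}>p_{2}$ it is the reverse Minkowski inequality in $L_{x}^{p_{2}/q_{2}}$ applied to the nonnegative functions $\int_{-\infty}^{a}\abs{f}^{q_{2}}\,dt$ and $\int_{a}^{b}\abs{f}^{q_{2}}\,dt$. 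With $\beta$ so fixed, $\lambda$ is continuous for every $f$ in the space by dominated convergence (the density reduction is not actually needed for this), the Whitney rectangles have the exactness and disjointness properties you state, the leftover set $\{s<t,\ \lambda(s)=\lambda(t)\}$ carries no $f$-mass and contributes nothing, and the argument closes.
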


\begin{lemma}[Lemma 4.1 in \cite{Chen2022embedding}]
	\label{lem-mpq-lr}
	Let $1\leq p,q,r\leq \infty$. Then $\mpq \hookrightarrow L^{r}$ if and only if $p\vee q \leq r \leq q'$.
\end{lemma}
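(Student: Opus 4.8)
The plan is to prove the two implications separately, working throughout with the uniform-decomposition norm $\norm{f}_{\mpq} = \norm{\norm{\Box_{k}f}_{L^{p}}}_{\ell_{k}^{q}}$. It is convenient to record that the condition $p\vee q \leq r \leq q'$ is precisely the conjunction of the three elementary inequalities $p\leq r$, $q\leq r$ and $r\leq q'$, and in the necessity direction I will attach each of these to a separate extremal configuration of test functions.

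For sufficiency, write $f=\sum_{k}\Box_{k}f$ and set $g_{k}=\Box_{k}f$, so that each $\widehat{g_{k}}$ is supported in the cube $k+[-3/4,3/4]^{d}$ and these supports have bounded overlap. The first ingredient is Bernstein's inequality (Lemma \ref{lem-bernstein}): since each $g_{k}$ is frequency-localized to a ball of radius $O(1)$ and $p\leq r$, one gets $\norm{g_{k}}_{L^{r}}\lesssim \norm{g_{k}}_{L^{p}}$ uniformly in $k$. The second ingredient is an almost-orthogonal summation estimate for frequency-disjoint pieces: interpolating the trivial bounds $\norm{\sum_{k}g_{k}}_{L^{1}}\leq \sum_{k}\norm{g_{k}}_{L^{1}}$ and $\norm{\sum_{k}g_{k}}_{L^{\infty}}\leq \sum_{k}\norm{g_{k}}_{L^{\infty}}$ against the $L^{2}$ estimate $\norm{\sum_{k}g_{k}}_{L^{2}}\lesssim(\sum_{k}\norm{g_{k}}_{L^{2}}^{2})^{1/2}$ (valid by the bounded overlap of supports) yields
\begin{align*}
	\norm{\textstyle\sum_{k}g_{k}}_{L^{r}} &\lesssim \Big(\sum_{k}\norm{g_{k}}_{L^{r}}^{r}\Big)^{1/r} \quad (1\leq r\leq 2),\\
	\norm{\textstyle\sum_{k}g_{k}}_{L^{r}} &\lesssim \Big(\sum_{k}\norm{g_{k}}_{L^{r}}^{r'}\Big)^{1/r'} \quad (2\leq r\leq \infty).
\end{align*}
Combining these with Bernstein and the nesting $\ell^{q}\hookrightarrow\ell^{s}$ (for $q\leq s$) closes the argument: when $r\leq 2$ I use the $\ell^{r}$ bound together with $q\leq r$, and when $r\geq 2$ I use the $\ell^{r'}$ bound together with $q\leq r'$, which is exactly $r\leq q'$. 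In each regime the remaining one of $p\leq r,\ q\leq r,\ r\leq q'$ is automatic (e.g. for $r\geq 2$, $r\leq q'$ forces $q\leq 2\leq r$), so in all cases $\norm{f}_{L^{r}}\lesssim \norm{\norm{g_{k}}_{L^{p}}}_{\ell_{k}^{q}}=\norm{f}_{\mpq}$.

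For necessity, I will exhibit, for each constraint, a family indexed by $N$ built from a fixed Schwartz bump $\phi$ whose Fourier transform is supported in a single cube; letting $N\to\infty$ extracts the inequality. To force $p\leq r$, take $N$ widely separated translates $f=\sum_{k}\phi(\cdot-y_{k})$ with essentially disjoint spatial supports; since translation leaves the frequency support fixed, $\norm{f}_{\mpq}\approx\norm{f}_{L^{p}}=N^{1/p}\norm{\phi}_{L^{p}}$, while $\norm{f}_{L^{r}}\approx N^{1/r}\norm{\phi}_{L^{r}}$, forcing $1/r\leq 1/p$. To force $q\leq r$, take bumps disjoint in both space and frequency, $f=\sum_{k}e^{ik\cdot x}\phi(\cdot-y_{k})$; then $\norm{f}_{\mpq}=N^{1/q}\norm{\phi}_{L^{p}}$ while spatial disjointness gives $\norm{f}_{L^{r}}\approx N^{1/r}\norm{\phi}_{L^{r}}$, forcing $1/r\leq 1/q$. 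Finally, to force $r\leq q'$, take frequency-disjoint but spatially coincident modulations $f=\phi\sum_{k=1}^{N}e^{ik\cdot x}$; here $\norm{f}_{\mpq}=N^{1/q}\norm{\phi}_{L^{p}}$, whereas the Dirichlet-kernel factor has size $\approx N$ on a set of measure $\approx N^{-1}$ near the origin, so $\norm{f}_{L^{r}}\gtrsim N^{1-1/r}$ and hence $1-1/r\leq 1/q$, i.e. $r\leq q'$.

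The main obstacle is the sufficiency direction, and within it the almost-orthogonal summation estimate: one must verify the bounded overlap of the frequency supports of the $\Box_{k}f$ to justify the $L^{2}$ endpoint, and then track carefully which exponent inequality is invoked in each of the regimes $r\leq 2$ and $r\geq 2$ so that the nesting $\ell^{q}\hookrightarrow\ell^{r}$ (resp. $\ell^{q}\hookrightarrow\ell^{r'}$) is licensed exactly by the hypotheses. By comparison, the necessity direction is routine once the three extremal configurations—spatially disjoint, doubly disjoint, and the Dirichlet-kernel configuration—are identified.
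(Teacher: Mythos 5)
This lemma is not proved in the paper at all: it is imported verbatim from \cite{Chen2022embedding} (Lemma 4.1 there), so there is no internal proof to compare against, and your proposal has to be judged on its own. Judged so, it is correct, and it is in substance the standard proof of this characterization: sufficiency via Bernstein plus the $\ell^{r\wedge r'}$-summation estimate for frequency-almost-disjoint pieces, with the regime $r\le 2$ consuming exactly $q\le r$ and the regime $r\ge 2$ consuming exactly $r\le q'$ (and, as you note, the remaining inequality being automatic in each regime); necessity via the three extremal families (separated translates, separated translates with separated modulations, and pure modulations of a single bump). One step you should make airtight: the interpolation in the summation estimate must be applied to a genuinely \emph{linear} operator defined on all of $\ell^{s}(L^{s})$, not to the assignment $(g_k)\mapsto\sum_k g_k$ restricted to the (interpolation-unfriendly) subspace of frequency-localized sequences. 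The standard fix is to fatten the multipliers: set $\tilde\sigma_k=\sum_{|l-k|_\infty\le 1}\sigma_l$, so $\tilde\sigma_k\equiv 1$ on $\supp\sigma_k$, and interpolate the linear operator $T\colon (h_k)_k\mapsto \sum_k \FF \tilde\sigma_k \F h_k$, which is bounded $\ell^{1}(L^{1})\to L^{1}$, $\ell^{2}(L^{2})\to L^{2}$ (by Plancherel and the bounded overlap of the $\supp\tilde\sigma_k$), and $\ell^{1}(L^{\infty})\to L^{\infty}$; applying the resulting bounds to $h_k=\Box_k f$ recovers $f=\sum_k\Box_k f$ and gives exactly your two displayed inequalities. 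Two cosmetic points in the necessity part: for $d>1$ the modulations should run along a lattice ray, say $e^{ik e_1\cdot x}$, $k=1,\dots,N$, so that the frequency supports are genuinely disjoint and $\Box_{ke_1}f$ picks out exactly one piece; and in the Dirichlet configuration one should fix $|\phi|\gtrsim 1$ on a neighborhood of the origin to justify $\norm{f}_{L^r}\gtrsim N^{1-1/r}$. With those adjustments your proof is complete.
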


\begin{lemma}[Lemma 5.2 in \cite{Chen2022embedding}]
	\label{lem-mpq-Flr}
	Let $1\leq p,q,r\leq \infty$. Then $\mpq \hookrightarrow \F L^{r}$ if and only if $p\leq 2, q\leq r\leq p' $.
\end{lemma}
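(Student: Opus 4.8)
The plan is to prove the two implications separately: the embedding will follow from a reassembly of $\hat f$ over the unit-cube frequency decomposition combined with Hausdorff--Young, while the necessity will come from three families of test functions, one tuned to each of the constraints $q\le r$, $r\le p'$, and $p\le 2$.

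For sufficiency I would start from the identity $\hat f=\sum_{k}\sigma_{k}\hat f=\sum_{k}\widehat{\Box_{k}f}$, whose summands have frequency supports of bounded overlap, so that each $\xi$ lies in at most $N_{d}$ of the cubes. Applying H\"older pointwise over the at most $N_{d}$ nonvanishing terms and then integrating gives
\begin{align*}
	\norm{f}_{\F L^{r}}=\norm{\hat f}_{L^{r}}\lesssim\Big(\sum_{k}\norm{\widehat{\Box_{k}f}}_{L^{r}}^{r}\Big)^{1/r}=\norm{\,\norm{\Box_{k}f}_{\F L^{r}}\,}_{\ell_{k}^{r}}.
\end{align*}
Since $\widehat{\Box_{k}f}$ is supported in a cube of fixed size, the hypothesis $p\le 2$ lets me invoke Hausdorff--Young, $\norm{\widehat{\Box_{k}f}}_{L^{p'}}\lesssim\norm{\Box_{k}f}_{L^{p}}$, and the bounded-support comparison (H\"older on a fixed cube, in the spirit of Bernstein's inequality, Lemma \ref{lem-bernstein}) upgrades this to $\norm{\widehat{\Box_{k}f}}_{L^{r}}\lesssim\norm{\Box_{k}f}_{L^{p}}$ whenever $r\le p'$, uniformly in $k$ by frequency-translation invariance. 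Finally $q\le r$ yields $\ell^{q}\hookrightarrow\ell^{r}$, so $\norm{\,\norm{\Box_{k}f}_{L^{p}}\,}_{\ell^{r}}\le\norm{\,\norm{\Box_{k}f}_{L^{p}}\,}_{\ell^{q}}=\norm{f}_{\mpq}$, which closes the estimate.

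For necessity I would test the embedding against tailored inputs. Placing $M$ disjoint unit-cube frequency bumps forces $M^{1/r}\lesssim M^{1/q}$, hence $q\le r$; taking a single frequency bump supported in a shrinking sub-cube of side $1/N$ gives $\norm{\hat f}_{L^{r}}\approx N^{-d/r}$ against $\norm{f}_{\mpq}\approx\norm{f}_{L^{p}}\approx N^{-d/p'}$, forcing $r\le p'$; and superposing $M$ widely separated physical translates of a fixed packet with random signs, $f=\sum_{n}\varepsilon_{n}h(\cdot-x_{n})$, yields $\norm{f}_{\mpq}\approx M^{1/p}$ while Khintchine's inequality gives $\norm{\hat f}_{L^{r}}\approx M^{1/2}$, forcing $p\le 2$. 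I expect the necessity direction, and in particular the randomized construction pinning down $p\le 2$, to be the main obstacle: the constraints $q\le r$ and $r\le p'$ reduce to clean single-parameter scaling comparisons, whereas extracting $p\le 2$ requires either the Khintchine square-function heuristic or an equivalent deterministic lacunary construction, together with the bookkeeping that verifies these three families are mutually consistent and jointly exhaust all the constraints.
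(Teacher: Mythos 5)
Your proposal is correct, but note that the paper itself does not prove this lemma at all: it is quoted verbatim as Lemma 5.2 of \cite{Chen2022embedding}, so there is no in-paper proof to compare against. Your argument is a legitimate self-contained one and follows the standard route for such embedding characterizations: for sufficiency, the bounded overlap of the supports of $\widehat{\Box_{k}f}$ gives $\norm{\hat f}_{L^{r}}^{r}\lesssim\sum_{k}\normo{\widehat{\Box_{k}f}}_{L^{r}}^{r}$, H\"older on the fixed-size cube plus Hausdorff--Young (this is where $p\leq 2$ and $r\leq p'$ enter) gives the uniform block estimate, and $\ell^{q}\hookrightarrow\ell^{r}$ closes it; for necessity, your three test families (separated frequency bumps for $q\leq r$, a shrinking single bump for $r\leq p'$, and randomized separated translates with Khintchine for $p\leq 2$) each isolate exactly one constraint, and the computations you sketch do work, including the upper bound $\norm{f}_{\mpq}\lesssim M^{1/p}$ in the third family, which requires the standard almost-disjointness argument exploiting the rapid decay of $\Box_{k}h$. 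Two small bookkeeping points you should make explicit: the pointwise H\"older step in sufficiency uses $r\geq 1$ (and the case $r=\infty$ is a trivial sup bound), and in necessity the Khintchine step only applies for $r<\infty$; when $r=\infty$ the constraint $r\leq p'$ from your second family already forces $p=1\leq 2$, so no separate argument is needed there.
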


\vskip 1.5cm
\section{Inclusion between $\mpqmu$ and $L^{r}$}\label{sec: inclusion}

\begin{prop}
	\label{prop-mpqmu-embed-lr}
	Let $1\leq p,q,r\leq \infty$, denote \begin{align*}
		a(p,q) &= d(1/p+1/q-1),\\
		\si(p,q) &= 0\wedge d(1/q-1/p) \wedge a(p,q). 
	\end{align*}
	When $\si(p,q) \leq \mu \leq a(p,q), \si(p,q) <a(p,q)$, then $\mpqmu \hookrightarrow L^{r}$ if and only if $p\vee q \leq r \leq q', \mu-d/p\leq -d/r.$
\end{prop}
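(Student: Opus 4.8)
The plan is to reduce the whole statement to an exact dilation identity between the scale-$j$ blocks $\mpqj$ and the ordinary modulation space $\mpq$, after which both directions become bookkeeping with powers of $2^{j}$. Concretely, for $j\leq0$ and $f\in\sch'$ I would set $g(x)=2^{-jd}f(2^{-j}x)$, so that $\widehat{g}(\eta)=\widehat{f}(2^{j}\eta)$. A change of variables $\xi=2^{j}\eta$ in the definition of $\Box_{j,k}=\FF\sigma(2^{-j}\cdot-k)\F$ gives the pointwise relation $\Box_{j,k}f(x)=2^{jd}(\Box_{k}g)(2^{j}x)$. Taking $L^{p}_{x}$ norms, then $\ell^{q}_{k}$ norms, and separately the $L^{r}$ norm, this yields the exact scaling relations
\begin{align*}
	\norm{f}_{\mpqj}=2^{jd(1-1/p)}\norm{g}_{\mpq},\qquad\norm{f}_{r}=2^{jd(1-1/r)}\norm{g}_{r}.
\end{align*}
This single pair of identities drives everything that follows.

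For sufficiency, assume $p\vee q\leq r\leq q'$ and $\mu-d/p\leq-d/r$. Given any admissible decomposition $f=\sum_{j\leq0}f_{j}$ with $f_{j}\in\mpqj$, write $g_{j}=2^{-jd}f_{j}(2^{-j}\cdot)$. The triangle inequality in $L^{r}$, Lemma \ref{lem-mpq-lr} (which supplies $\norm{g_{j}}_{r}\lesssim\norm{g_{j}}_{\mpq}$ under $p\vee q\leq r\leq q'$, with constant independent of $j$), and the identities above give
\begin{align*}
	\norm{f}_{r}\leq\sum_{j\leq0}2^{jd(1-1/r)}\norm{g_{j}}_{r}\lesssim\sum_{j\leq0}2^{jd(1-1/r)}\norm{g_{j}}_{\mpq}=\sum_{j\leq0}2^{jd(1/p-1/r)}\norm{f_{j}}_{\mpqj}.
\end{align*}
Since $\mu-d/p\leq-d/r$ is exactly $\mu\leq d(1/p-1/r)$, and since $j\leq0$, one has $2^{jd(1/p-1/r)}\leq2^{j\mu}$, so the last sum is $\lesssim\sum_{j\leq0}2^{j\mu}\norm{f_{j}}_{\mpqj}$. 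Taking the infimum over all decompositions gives $\norm{f}_{r}\lesssim\norm{f}_{\mpqmu}$, i.e. $\mpqmu\hookrightarrow L^{r}$.

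For necessity I would argue in two steps. First, by Remark \ref{rm-mpqmu-mpq} we have $\mpq\hookrightarrow\mpqmu$, so $\mpqmu\hookrightarrow L^{r}$ forces $\mpq\hookrightarrow L^{r}$, and Lemma \ref{lem-mpq-lr} then yields $p\vee q\leq r\leq q'$. Second, to force $\mu-d/p\leq-d/r$, I would test the embedding against a single-scale family: fix a Schwartz function $g$ with $\norm{g}_{r}$ and $\norm{g}_{\mpq}$ both finite and nonzero, and for $j\leq0$ set $f_{j}(x)=2^{jd}g(2^{j}x)$, whose Fourier transform is supported in one dilated cube. The trivial one-term decomposition bounds $\norm{f_{j}}_{\mpqmu}\leq2^{j\mu}\norm{f_{j}}_{\mpqj}=2^{j\mu}2^{jd(1-1/p)}\norm{g}_{\mpq}$, while $\norm{f_{j}}_{r}=2^{jd(1-1/r)}\norm{g}_{r}$, so that
\begin{align*}
	\frac{\norm{f_{j}}_{r}}{\norm{f_{j}}_{\mpqmu}}\gtrsim 2^{j(d(1/p-1/r)-\mu)}.
\end{align*}
If $\mu>d(1/p-1/r)$ the exponent is negative, and letting $j\to-\infty$ makes this ratio blow up, contradicting the boundedness of $\mpqmu\hookrightarrow L^{r}$; hence $\mu\leq d(1/p-1/r)$, which is $\mu-d/p\leq-d/r$.

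The one genuinely delicate point is this last step: the lower bound on $\norm{f_{j}}_{r}$ is immediate, but one must control $\norm{f_{j}}_{\mpqmu}$ \emph{from above}, and the trivial single-term decomposition does exactly this because the $\mpqmu$ norm is an infimum. The standing hypotheses $\si(p,q)\leq\mu\leq a(p,q)$ and $\si(p,q)<a(p,q)$ I expect to enter only in guaranteeing that $\mpqmu$ is a well-defined, complete, nontrivial Banach space so that the embedding statement is meaningful; they do not otherwise affect the scaling computation, which is the heart of the argument.
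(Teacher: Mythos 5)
Your proof is correct, and the sufficiency half coincides with the paper's own argument: triangle inequality in $L^{r}$, Lemma \ref{lem-mpq-lr} applied to each block rescaled to unit scale, the exact dilation identities, and finally $2^{jd(1/p-1/r)}\leq 2^{j\mu}$ for $j\leq 0$ before taking the infimum. Where you genuinely diverge is in forcing $\mu-d/p\leq -d/r$: the paper cites the scaling property of $\mpqmu$ (Proposition 5.5 in the Sugimoto--Wang reference), namely the upper bound $\norm{f_{\la}}_{\mpqmu}\lesssim \la^{\mu-d/p}\norm{f}_{\mpqmu}$ for $0<\la<1$, tested against $\varphi_{\la}$; this citation is exactly where the hypothesis $\si(p,q)<a(p,q)$ enters the paper's proof (it guarantees $p,q<\infty$ so the cited proposition applies). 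You instead prove the needed one-sided scaling bound directly: for the dyadic dilates $f_{j}=2^{jd}g(2^{j}\cdot)$, the single-term decomposition at scale $j$ gives $\norm{f_{j}}_{\mpqmu}\leq 2^{j\mu}\norm{f_{j}}_{\mpqj}=2^{j(\mu+d(1-1/p))}\norm{g}_{\mpq}$, which is all the test requires because the $\mpqmu$ norm is an infimum. This makes your necessity argument self-contained (no appeal to the external scaling result, and no need to restrict to $p,q<\infty$), while the paper's route is shorter given the literature; your reading that the standing hypotheses on $\mu$ then serve only to make $\mpqmu$ a meaningful, nondegenerate space is consistent with this. One cosmetic correction: your remark that $\widehat{f_{j}}$ is ``supported in one dilated cube'' is false for a general Schwartz $g$ and is never used in the argument, so it should be deleted; the scaling identities themselves need no support assumption.
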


\begin{proof}
	Sufficiency: for any $f=\sum_{j\leq 0} f_{j}, f_{j} \in \mpq^{[j]}$. When $p\vee q \leq r \leq q'$, by Lemma \ref{lem-mpq-lr}, we have $\mpq \hookrightarrow L^{r}$. Therefore, when $\mu-d/p+d/r \leq 0$, by the triangular inequality, we have 
	\begin{align*}
		\norm{f}_{r} &\leq \sum_{j\leq 0} \norm{f_{j}}_{r} \leq \sum_{j\leq 0} 2^{j(\mu-d/p+d/r)} \norm{f_{j}}_{r} \\
		& \leq \sum_{j\leq 0} 2^{j(\mu-d/p)} \norm{(f_{j})_{2^{-j}}}_{r} \\
		&\leq \sum_{j\leq 0} 2^{j(\mu-d/p)} \norm{(f_{j})_{2^{-j}} }_{\mpq} = \sum_{j\leq 0}2^{j\mu} \norm{f_{j}}_{\mpq^{[j]}}.
	\end{align*}
	To take the infimum of the decomposition of $f$, we have $\norm{f}_{r} \lesssim \norm{\mpqmu}.$
	
	Necessity: if we know $\mpqmu \hookrightarrow L^{r}$, then by Remark \ref{rm-mpqmu-mpq}, we have $\mpq \hookrightarrow\mpqmu \hookrightarrow L^{r}$. By Lemma \ref{lem-mpq-lr}, we have $p\vee q \leq r\leq q'$. Meanwhile, when $\si(p,q) <a(p,q)$, we know that $p,q< \infty$. Then we could use the scaling property of $\mpqmu$ (See Proposition 5.5 in \cite{Sugimoto2021Scaling}). 
	
	If we have $\norm{f}_{r} \lesssim \norm{f}_{\mpqmu}$, take $f=\varphi_{\la}$ for some $\varphi \in \sch, 0<\la <1$. Then we have \begin{align*}
		\la^{-d/r} \norm{f}_{r} = \norm{f_{\la}}_{r} \lesssim \norm{f_{\la}}_{\mpqmu} \lesssim \la^{\mu-d/p} \norm{f}_{\mpqmu}.
	\end{align*}
	So, we have $\mu-d/p \leq -d/r.$
\end{proof}

\begin{exa}
	\label{exa:L2-infty}
	For $\mu>0$, there exists $u \in \mtwonemu \setminus L^{2}$. 
	
	For any $j \leq 0, j\in \Z$, denote $k_{j}=(j,\cdots,0) \in \Z^{d}$. Choose $\varphi \in \sch$ with $\supp \widehat{\varphi} \subseteq [-1/8,1/8]^{d}$. Take \begin{align*}
		u= \sum_{j\leq 0} \frac{2^{j(d/2-\mu)}}{j^{2}} e^{ik_{j}x} \varphi(2^{j}x).
	\end{align*}
Then we have \begin{align*}
	\norm{u}_{2} \geq \norm{\Box_{j,k_{j}}u}_{2}  \approx \frac{2^{-j\mu}}{j^{2}}.
\end{align*}
Let $j\rightarrow -\infty$, we have $\norm{u}_{2}=\infty$. 

On the other hand, by the definition of $\mtwonemu$ in Section \ref{sec:pre}, we have \begin{align*}
	\norm{u}_{\mtwonemu} \leq \sum_{j\leq 0} 2^{j\mu} \norm{\frac{2^{j(d/2-\mu)}}{j^{2}} e^{ik_{j}x} \varphi(2^{j}x)}_{M_{2,1}^{[j]}} \lesssim \sum_{j\leq 0} \frac{1}{j^{2}}\lesssim 1.
\end{align*}
\end{exa}

By a similar method to Proposition \ref{prop-mpqmu-embed-lr} with Lemma \ref{lem-mpq-Flr}, we could also characterize the inclusion between $\mpqmu$ and $\F L^{r}$.
\begin{prop}
	\label{prop-mpqmu-Flr}
	Let $1\leq p,q,r\leq \infty, \sigma(p,q) \leq \mu \leq a(p,q), \si(p,q)<a(p,q)$. Then $\mpqmu \hookrightarrow \F L^{r}$ if and only if $q\leq r \leq p', p\leq 2, \mu-d/p\leq -d/r'.$
\end{prop}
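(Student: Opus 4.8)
The plan is to mirror the proof of Proposition \ref{prop-mpqmu-embed-lr}, replacing Lemma \ref{lem-mpq-lr} by its Fourier--Lebesgue analogue Lemma \ref{lem-mpq-Flr}, and tracking carefully how the scaling exponent changes when the target is $\F L^{r}$ rather than $L^{r}$. The key observation driving everything is the scaling behaviour of the $\F L^{r}$ norm: since $\F L^{r}$ is a space defined through the Fourier side, a dilation $f \mapsto f_{\la}(x) = f(\la x)$ acts with a different homogeneity weight than on $L^{r}$, namely $\norm{f_{\la}}_{\F L^{r}} = \la^{-d/r'}\norm{f}_{\F L^{r}}$ (because $\wh{f_{\la}}(\xi) = \la^{-d}\wh{f}(\xi/\la)$, and the $L^{r}$ norm of this scales like $\la^{-d}\la^{d/r} = \la^{-d/r'}$). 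This explains why the final condition reads $\mu - d/p \leq -d/r'$ instead of $\mu - d/p \leq -d/r$.

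For the sufficiency direction I would argue exactly as in the earlier proposition. Given any admissible decomposition $f = \sum_{j\leq 0} f_{j}$ with $f_{j}\in \mpq^{[j]}$, the hypotheses $p\leq 2$ and $q\leq r\leq p'$ let me invoke Lemma \ref{lem-mpq-Flr} to get $\mpq \hookrightarrow \F L^{r}$. I then use the triangle inequality on the Fourier--Lebesgue norm, insert the factor $2^{j(\mu - d/p + d/r')}\leq 1$ (which holds for $j\leq 0$ precisely under the constraint $\mu - d/p \leq -d/r'$), and undo it using the $\F L^{r}$ scaling identity above to convert $\norm{f_{j}}_{\F L^{r}}$ into $2^{j(\mu-d/p)}\norm{(f_{j})_{2^{-j}}}_{\F L^{r}}$. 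Bounding the rescaled piece by its $\mpq$ norm via Lemma \ref{lem-mpq-Flr} and recognizing $\norm{(f_{j})_{2^{-j}}}_{\mpq} = \norm{f_{j}}_{\mpq^{[j]}}$, I reassemble the sum as $\sum_{j\leq 0} 2^{j\mu}\norm{f_{j}}_{\mpq^{[j]}}$, and take the infimum over decompositions to conclude $\norm{f}_{\F L^{r}} \lesssim \norm{f}_{\mpqmu}$.

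For the necessity direction I would again use $\mpq \hookrightarrow \mpqmu$ from Remark \ref{rm-mpqmu-mpq}: if $\mpqmu \hookrightarrow \F L^{r}$ then $\mpq \hookrightarrow \F L^{r}$, and Lemma \ref{lem-mpq-Flr} immediately forces $p\leq 2$ and $q\leq r\leq p'$. To extract the remaining scaling constraint $\mu - d/p\leq -d/r'$, I note that the assumption $\si(p,q) < a(p,q)$ guarantees $p,q < \infty$, so the scaling property of $\mpqmu$ (Proposition 5.5 in \cite{Sugimoto2021Scaling}) is available, giving $\norm{f_{\la}}_{\mpqmu}\lesssim \la^{\mu - d/p}\norm{f}_{\mpqmu}$ for $0<\la<1$. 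Testing the embedding on $f = \varphi_{\la}$ for a fixed Schwartz $\varphi$ and comparing $\la^{-d/r'}\norm{\varphi}_{\F L^{r}} = \norm{\varphi_{\la}}_{\F L^{r}}\lesssim \norm{\varphi_{\la}}_{\mpqmu}\lesssim \la^{\mu - d/p}\norm{\varphi}_{\mpqmu}$ and letting $\la\to 0^{+}$ yields the required inequality on the exponents.

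The main obstacle, and the only place requiring genuine care rather than transcription, is getting the $\F L^{r}$ dilation exponent right and confirming it threads correctly through both directions. On $L^{r}$ one has weight $-d/r$, whereas on $\F L^{r}$ one must use $-d/r'$; a sign or dual-index slip here would produce the wrong final condition. I would therefore verify the scaling identity $\norm{f_{\la}}_{\F L^{r}} = \la^{-d/r'}\norm{f}_{\F L^{r}}$ explicitly once, and then check that the same exponent $-d/r'$ is what makes the inserted power $2^{j(\mu-d/p+d/r')}$ bounded by $1$ for $j\leq 0$ in the sufficiency argument and what appears in the test-function computation in the necessity argument, so that the two directions close on matching conditions.
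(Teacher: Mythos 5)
Your proposal is correct and is essentially the paper's own argument: the paper gives no separate proof of this proposition, stating only that it follows ``by a similar method to Proposition \ref{prop-mpqmu-embed-lr} with Lemma \ref{lem-mpq-Flr},'' which is exactly what you carry out. The one genuinely new ingredient needed for the transcription --- the dilation identity $\norm{f_{\la}}_{\F L^{r}} = \la^{-d/r'}\norm{f}_{\F L^{r}}$, which replaces the $L^{r}$ exponent $-d/r$ and produces the condition $\mu - d/p \leq -d/r'$ in both the sufficiency and necessity directions --- is identified and verified correctly in your write-up.
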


\vskip 1.5cm
\section{Linear estimates}\label{sec:linear}
\begin{lemma}[\cite{Kenig1991Oscillatory}]\label{lem-sm-max}
	Denote $D^{s}f = \FF \abs{\xi}^{s} \widehat{f}(\xi)$. Then we have 
	\begin{align*}
		&\norm{D^{3/2} W(t)u}_{L_{x}^{\infty}L_{t}^{2}} \lesssim \norm{u}_{2};\\
		&\norm{ W(t)u}_{L_{x}^{4}L_{t}^{\infty}} \lesssim \norm{u}_{\dot{H}^{1/4}}.
	\end{align*}
\end{lemma}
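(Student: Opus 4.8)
The plan is to prove the two estimates by separate arguments: the first (local smoothing) is an exact computation via Plancherel, while the second (maximal function) reduces by a $TT^{*}$ argument to a single oscillatory integral bound.

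For the first estimate I would freeze $x$ and compute the $L_{t}^{2}$ norm by Plancherel in $t$. Splitting $u = u_{+} + u_{-}$ according to $\supp \widehat{u_{\pm}} \subseteq \set{\pm\xi > 0}$, on each half-line the map $\xi \mapsto \eta = \xi^{4}$ is a bijection onto $(0,\infty)$ with Jacobian $\abs{d\xi/d\eta} = \tfrac14\eta^{-3/4} = \tfrac14\abs{\xi}^{-3}$. Writing
\begin{align*}
D^{3/2}W(t)u_{+}(x) = \int_{0}^{\infty} \abs{\xi}^{3/2} e^{it\xi^{4}} e^{ix\xi}\widehat{u}(\xi)\, d\xi = \int_{0}^{\infty} g_{x}(\eta)\, e^{it\eta}\, d\eta,
\end{align*}
the weight $\abs{\xi}^{3/2}$ contributes $\abs{\xi}^{3}$ after squaring, which cancels exactly the Jacobian factor $\abs{\xi}^{-3}$. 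Hence by Plancherel
\begin{align*}
\norm{D^{3/2}W(t)u_{+}(x)}_{L_{t}^{2}}^{2} \approx \int_{0}^{\infty}\abs{\widehat{u}(\xi)}^{2}\, d\xi = \norm{u_{+}}_{2}^{2},
\end{align*}
which is independent of $x$, and likewise for $u_{-}$. Taking the supremum over $x$ gives the first claim. The point is that the order $3/2$ is forced: it is exactly half the order of vanishing of the group velocity $\pd_{\xi}(\xi^{4}) = 4\xi^{3}$.

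For the second estimate I would linearize the maximal operator and then apply $TT^{*}$. Writing $u = D^{-1/4}f$, it suffices to bound $\sup_{t}\abs{W(t)D^{-1/4}f}$ in $L_{x}^{4}$ by $\norm{f}_{2}$. Linearizing the supremum through a measurable function $x \mapsto t(x)$, set
\begin{align*}
Sf(x) = (2\pi)^{-1}\int_{\R} e^{i(x\xi + t(x)\xi^{4})}\abs{\xi}^{-1/4}\widehat{f}(\xi)\, d\xi,
\end{align*}
so that it is enough to show $\norm{Sf}_{L_{x}^{4}} \lesssim \norm{f}_{2}$ uniformly in the choice of $t(\cdot)$. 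Since $\norm{S}_{L^{2}\to L^{4}}^{2} = \norm{SS^{*}}_{L^{4/3}\to L^{4}}$, I compute the kernel of $SS^{*}$: the intermediate integration produces a delta in frequency, leaving
\begin{align*}
SS^{*}h(x) = \int_{\R} K(x,y)\, h(y)\, dy, \qquad K(x,y) = \frac{1}{2\pi}\int_{\R} e^{i((x-y)\xi + (t(x)-t(y))\xi^{4})}\abs{\xi}^{-1/2}\, d\xi.
\end{align*}
If I can establish $\abs{K(x,y)} \lesssim \abs{x-y}^{-1/2}$ uniformly in the parameters $t(x), t(y)$, then $\abs{SS^{*}h} \lesssim I_{1/2}\abs{h}$, and the Hardy--Littlewood--Sobolev inequality gives $I_{1/2}: L^{4/3}\to L^{4}$ in dimension one (as $1/4 = 3/4 - 1/2$), which closes the argument.

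Everything thus reduces to the oscillatory integral estimate
\begin{align*}
\abs{\int_{\R} e^{i(a\xi + b\xi^{4})}\abs{\xi}^{-1/2}\, d\xi} \lesssim \abs{a}^{-1/2}, \quad \text{uniformly in } b \in \R.
\end{align*}
The dilation $\xi \mapsto \xi/\abs{a}$ yields the identity $I(a,b) = \abs{a}^{-1/2} I(\sgn a,\, b\abs{a}^{-4})$, reducing matters to the single-parameter bound $\sup_{c\in\R}\abs{\int_{\R} e^{i(\pm\xi + c\xi^{4})}\abs{\xi}^{-1/2}\,d\xi} < \infty$. Near $\xi = 0$ the amplitude is integrable; for large $\abs{\xi}$ I would decompose dyadically and apply van der Corput, handling separately a neighborhood of the possible stationary point $\xi_{*} = (\mp 1/(4c))^{1/3}$, where $\phi''(\xi_{*}) = 12c\xi_{*}^{2} \neq 0$ so the stationary contribution is nondegenerate, and the nonstationary regions where $\phi'$ is bounded below. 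This uniform oscillatory estimate, which is exactly what is carried out in \cite{Kenig1991Oscillatory}, is the main obstacle; the rest of the argument is essentially formal once it is in hand.
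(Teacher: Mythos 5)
Your proposal is correct and is essentially the argument the paper relies on: the paper does not prove this lemma but cites \cite{Kenig1991Oscillatory}, and your two-part proof — Plancherel in $t$ after the change of variables $\eta=\xi^{4}$ (with the $|\xi|^{3}$ from squaring the weight cancelling the Jacobian) for the smoothing estimate, and linearization of the supremum plus $TT^{*}$, Hardy--Littlewood--Sobolev, and a scaled van der Corput analysis of the kernel for the maximal estimate — is precisely the standard Kenig--Ponce--Vega proof from that reference. Both halves check out, including the uniformity in the parameter $b$ near the nondegenerate stationary point, so there is nothing to correct.
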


We will use Lemma \ref{lem-sm-max} to derive some smoothing effect estimates and maximal estimates in local frequency spaces, as follows.

\begin{lemma}
	\label{lem-box-smooth-max}
	For any $p\geq 4, k\in \Z$. We have 
		\begin{align*}
		&\norm{D^{3/2} \Box_{k}W(t)u}_{L_{x}^{\infty}L_{t}^{2}} \lesssim \norm{\Box_{k}u}_{2};\\
		&\norm{\Box_{k} W(t)u}_{L_{x}^{p}L_{t}^{\infty}} \lesssim \norm{\Box_{k}u}_{\dot{H}^{1/p}}.
	\end{align*}
\end{lemma}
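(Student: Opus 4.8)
The plan is to deduce Lemma \ref{lem-box-smooth-max} from the global estimates in Lemma \ref{lem-sm-max} by exploiting that $\Box_k$ localizes the frequency to a unit cube centered at $k$, together with Bernstein's inequality (Lemma \ref{lem-bernstein}) to upgrade the spatial integrability exponent from the fixed value $4$ appearing in Lemma \ref{lem-sm-max} to an arbitrary $p\geq 4$. The first observation is that the Fourier multiplier $\Box_k$ commutes with $W(t)$ and with $D^{s}$, since all are Fourier multipliers in the spatial variable; hence $\Box_k W(t)u = W(t)\Box_k u$, and we may freely move $\Box_k$ onto the data.

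For the smoothing estimate, I would apply the first inequality of Lemma \ref{lem-sm-max} directly to the function $\Box_k u$ in place of $u$, giving
\begin{align*}
\norm{D^{3/2} W(t)\Box_k u}_{L_{x}^{\infty}L_{t}^{2}} \lesssim \norm{\Box_k u}_{2},
\end{align*}
and then commute $\Box_k$ past $D^{3/2}$ and $W(t)$ on the left-hand side to obtain the claimed bound. This step is essentially immediate once the commutation is noted, so the smoothing estimate should require no real work beyond the frequency localization.

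The maximal estimate is the substantive part. Applying the second inequality of Lemma \ref{lem-sm-max} to $\Box_k u$ yields control in $L_x^4 L_t^\infty$, but the target is $L_x^p L_t^\infty$ with $p\geq 4$. Here I would invoke Bernstein's inequality: because $\Box_k W(t)u$ has spatial Fourier support in the cube $B(k, C)$ of fixed radius (the support of $\sigma_k$, a ball of radius comparable to $1$ in $d=1$), Lemma \ref{lem-bernstein} with $q=p$ and the source exponent $4$ gives a gain $R^{1/4 - 1/p}$ with $R \approx 1$, hence a constant independent of $k$. One must apply Bernstein pointwise in $t$ (to the spatial slices) and then take the $L_t^\infty$ norm; since the frequency support is in $t$-independent and the constant is uniform, the supremum in $t$ is preserved. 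This reduces the $L_x^p$ bound to the already-available $L_x^4$ bound, and combining with the scattered factor $\norm{\Box_k u}_{\dot H^{1/p}}$ on the right requires checking that the homogeneous Sobolev exponent matches; since the frequency is localized, $\norm{\Box_k u}_{\dot H^{1/4}} \approx \norm{\Box_k u}_{\dot H^{1/p}}$ up to a $k$-dependent but harmless factor, or one reproves the maximal bound at regularity $1/p$ directly.

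The main obstacle I anticipate is the uniformity of all constants in $k$. Bernstein's inequality as stated in Lemma \ref{lem-bernstein} gives a constant independent of the center $\xi_0$ and depending only on the radius $R$, which is exactly what is needed, but one must be careful that the implicit constants from Lemma \ref{lem-sm-max} applied to $\Box_k u$ do not secretly depend on $k$ through the Sobolev norm $\dot H^{1/4}$ versus $\dot H^{1/p}$: on a frequency cube centered at $k$ these homogeneous norms differ by factors involving $\abs{k}$, so care is needed to route the argument so that the final right-hand side is $\norm{\Box_k u}_{\dot H^{1/p}}$ with a truly $k$-uniform constant. The cleanest route is probably to apply Bernstein to pass from $L_x^4$ to $L_x^p$ first, keeping the regularity at the level dictated by the target, rather than converting Sobolev exponents afterward.
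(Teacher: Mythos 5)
Your treatment of the first (smoothing) estimate is correct and is exactly what the paper does: $\Box_k$, $D^{3/2}$ and $W(t)$ are all Fourier multipliers in $x$, so one simply applies Lemma \ref{lem-sm-max} to $\Box_k u$ and commutes. The maximal estimate, however, is where your argument has a genuine gap --- in fact two. The lesser one is the order of the mixed norms: applying Bernstein's inequality to the spatial slices $\Box_k W(t)u(\cdot,t)$ and then taking the supremum in $t$ controls $\norm{\Box_k W(t)u}_{L_t^\infty L_x^p}$ by $\norm{\Box_k W(t)u}_{L_t^\infty L_x^4}$, whereas the target norm is $L_x^p L_t^\infty$ (the $x$-norm \emph{outside}). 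Since $\norm{f}_{L_t^\infty L_x^p} \leq \norm{f}_{L_x^p L_t^\infty}$ pointwise in the obvious way, and the discrepancy between the two can be unbounded, your slicewise bound controls the weaker of the two quantities; note also that the maximal function $x \mapsto \sup_t \abs{\Box_k W(t)u(x)}$ is not frequency localized, so Lemma \ref{lem-bernstein} cannot be applied to it directly. This defect is repairable: using the reproducing identity $\Box_k W(t)u(\cdot,t) = \brk{\Box_k W(t)u(\cdot,t)} * \FF\psi$, with $\psi$ a bump equal to $1$ on $\supp \sigma_k$, together with Young's inequality, yields a mixed-norm Bernstein inequality $\norm{f}_{L_x^p L_t^\infty} \lesssim \norm{f}_{L_x^4 L_t^\infty}$ for such frequency-localized $f$.

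The second defect is fatal to your route. After the (repaired) Bernstein step you only have $\norm{\Box_k W(t)u}_{L_x^p L_t^\infty} \lesssim \norm{\Box_k u}_{\dot{H}^{1/4}}$, and you must still convert $\dot{H}^{1/4}$ into $\dot{H}^{1/p}$. On the frequency cube of $\sigma_k$ one has $\abs{\xi} \approx \abs{k}$ for $\abs{k}\geq 2$, hence $\norm{\Box_k u}_{\dot{H}^{1/4}} \approx \abs{k}^{1/4-1/p}\,\norm{\Box_k u}_{\dot{H}^{1/p}}$: the conversion goes the \emph{wrong} way, with a factor $\abs{k}^{1/4-1/p}$ that is unbounded in $k$. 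You noticed this danger, but neither of your proposed remedies resolves it: Bernstein's gain is only $R^{1/4-1/p}$ with $R\approx 1$, a constant, so nothing is available to cancel the loss, and ``reproving the maximal bound at regularity $1/p$ directly'' is precisely the statement to be proved. The missing idea --- and what the paper actually does --- is to produce a second, $k$-uniform endpoint at $p=\infty$: by Bernstein pointwise in $t$ (from $L_x^2$ to $L_x^\infty$, radius $\approx 1$) and the $L^2$-isometry of $W(t)$, one gets $\norm{\Box_k W(t)u}_{L_{t,x}^\infty} \lesssim \norm{\Box_k W(t)u}_{L_t^\infty L_x^2} = \norm{\Box_k u}_2$, where the norm-order issue is vacuous because $L_x^\infty L_t^\infty = L_t^\infty L_x^\infty$, and where there is no derivative cost. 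Interpolating between this endpoint and the $p=4$ estimate of Lemma \ref{lem-sm-max} applied to $\Box_k u$ then blends the $\abs{k}^{1/4}$-type loss at one endpoint with the lossless $L^2$ endpoint, producing exactly $\norm{\Box_k u}_{\dot{H}^{1/p}}$ on the right-hand side, uniformly in $k$.
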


\begin{proof}
	Replace $u$ by $\Box_{k}u$ in the above estimates, we could get the first estimate and the second estimate with $p=4$. 
	
	By Bernstein's inequality (Lemma \ref{lem-bernstein}) and $L^{2}$ isometry of $W(t)$, we have \begin{align*}
		\norm{\Box_{k} W(t) u}_{L_{t,x}^{\infty}} \lesssim \norm{\Box_{k} W(t)u}_{L_{t}^{\infty}L_{x}^{2}} = \norm{\Box_{k}u}_{2}.
	\end{align*}
Take the interpolation between $p=4$ and $p=\infty$, we could get the result as desired.
\end{proof}

\begin{prop}\label{prop-homo}
	For any $p\geq 4,k\in \Z$, we have \begin{align*}
		&\norm{\Box_{k}W(t)u}_{L_{x}^{p+2/3} L_{t}^{3p+2}} \lesssim \norm{\Box_{k}u}_{2};\\
		&\norm{\Box_{k}W(t)\pd_{x} u}_{L_{x}^{3p+2} L_{t}^{(3p+2)/(p+1)}} \lesssim \norm{\Box_{k}u}_{2}.
	\end{align*}
\end{prop}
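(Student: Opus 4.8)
The plan is to derive both displayed estimates by complex interpolation between the smoothing effect and the maximal estimate of Lemma \ref{lem-box-smooth-max}, after rewriting those two bounds so that all derivatives are moved onto the data and the target exponents are reached precisely when the net order of differentiation vanishes. No retarded operator appears here, so Lemma \ref{lem-christ} is not needed; this is purely a statement about the free evolution $W(t)$.

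First I would reformulate the two endpoint bounds. Since $W(t)$, $\Box_k$, every homogeneous power $D^s$ and $\partial_x$ are Fourier multipliers, they all commute, and $\norm{\partial_x f}_2 = \norm{Df}_2$ because $|i\xi| = |\xi|$. Hence the smoothing estimate is equivalent to the $L^2 \to L_x^\infty L_t^2$ bound for the operator $\Box_k W(t) D^{3/2}$, while the maximal estimate is equivalent to the $L^2 \to L_x^p L_t^\infty$ bound for $\Box_k W(t) D^{-1/p}$. For the first estimate I would then introduce the analytic family $T_z := \Box_k W(t) D^{a(z)}$ with $a(z) = \tfrac32 - z(\tfrac32 + \tfrac1p)$, so that $a(0) = \tfrac32$ and $a(1) = -\tfrac1p$. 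On the line $\re z = 0$ one has $D^{a(i\tau)} = D^{3/2}\,D^{-i\tau(3/2+1/p)}$, the imaginary power is an $L^2$-isometry, and the smoothing estimate gives $\norm{T_{i\tau} w}_{L_x^\infty L_t^2} \lesssim \norm{\Box_k w}_2$ uniformly in $\tau$; on $\re z = 1$ the maximal estimate gives $\norm{T_{1+i\tau} w}_{L_x^p L_t^\infty} \lesssim \norm{\Box_k w}_2$ uniformly in $\tau$.

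Next I would apply Stein's analytic interpolation theorem together with the complex interpolation of mixed-norm Lebesgue spaces, which identifies $[L_x^\infty L_t^2, L_x^p L_t^\infty]_\theta = L_x^{a_\theta} L_t^{b_\theta}$ with $1/a_\theta = \theta/p$ and $1/b_\theta = (1-\theta)/2$. Choosing $\theta = 3p/(3p+2)$ gives $a_\theta = p + 2/3$ and $b_\theta = 3p+2$, and simultaneously $a(\theta) = 0$, so $T_\theta = \Box_k W(t)$ and the first estimate follows. For the second estimate I would repeat the scheme with the family $\Box_k W(t)\partial_x D^{b(z)}$: the identity $\norm{\partial_x f}_2 = \norm{Df}_2$ turns the two bounds into $\norm{\Box_k W(t)\partial_x v}_{L_x^\infty L_t^2} \lesssim \norm{D^{-1/2}\Box_k v}_2$ and $\norm{\Box_k W(t)\partial_x v}_{L_x^p L_t^\infty} \lesssim \norm{D^{1+1/p}\Box_k v}_2$, i.e.\ $b(0) = 1/2$ and $b(1) = -1-1/p$; interpolating at $\theta' = p/(3p+2)$ again cancels the derivatives ($b(\theta')=0$) while producing the space-time indices $(3p+2,\,(3p+2)/(p+1))$.

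The point requiring care is that the two boundary estimates carry different orders of differentiation, which is exactly why a single interpolation of the raw inequalities will not do and one must interpolate an analytic family of operators. The admissibility hypotheses of Stein's theorem hold because the only $z$-dependent factor on the two boundary lines is the modulus-one multiplier $D^{-i\tau(3/2+1/p)}$, which is bounded on $L^2$ uniformly in $\tau$. The only slightly delicate block is $k=0$, where $D^s$ is a genuine (possibly negative-order) homogeneous derivative, but since on each boundary line we only need $L^2$-boundedness of a modulus-one multiplier and the net order at the interpolation parameter is exactly zero, no difficulty arises. I expect the bookkeeping of the exponents, namely checking that $a(\theta)=0$ and $b(\theta')=0$ occur at precisely the values of $\theta,\theta'$ that deliver the prescribed $L_x^a L_t^b$ indices, to be the only genuine computation.
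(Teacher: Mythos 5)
Your proposal is correct and follows essentially the same route as the paper: the paper also proves Proposition \ref{prop-homo} by interpolating the smoothing estimate and the maximal estimate of Lemma \ref{lem-box-smooth-max}, choosing the interpolation parameter so that the net derivative order is $0$ (first bound) or $1$ (second bound), which yields exactly your exponents $\theta=3p/(3p+2)$ and $\theta'=p/(3p+2)$. The only difference is presentational: the paper asserts the interpolated inequality with varying derivative orders directly, whereas you make the underlying mechanism explicit via Stein's analytic interpolation of the family $\Box_k W(t)D^{a(z)}$, which is the standard rigorous justification of that step.
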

\begin{proof}
	Take the interpolation of the two estimates in Lemma \ref{lem-box-smooth-max}, we have 
	\begin{align*}
		\norm{D^{\al} \Box_{k}W(t)u}_{L_{x}^{a}L_{t}^{b}} \lesssim \norm{\Box_{k}u}_{\dot{H}^{s}},
	\end{align*}
when 
	\ncase{\al&= \frac{3(1-\th)}{2} + 0 \cdot \th, \\
	s&= 0(1-\th) + \th/p,\\
	\frac{1}{a}&=\frac{1-\th}{\infty} + \frac{\th}{p},\\
	\frac{1}{b}&=\frac{1-\th}{2} + \frac{\th}{\infty},}
which is equivalent to 
	\ncase{\th&= p/a,\\
	\al&=3/b,\\
s&= 1/a,\\
1&=\frac{p}{a}+\frac{2}{b}.}
Take $\al=s$ into the equations above, we have $a=p+2/3,b=3p+2$, which is the first estimate in the proposition. Also, taking $\al -s=1$, we can obtain the second estimate as desired.
\end{proof}


By the duality argument, we could get the following estimate of $\mathscr{A} \pd_{x}f$.

\begin{prop}
	\label{prop-nonhomo}
	Let $p \geq 4, k\in \Z$. Then we have \begin{align*}
		\norm{\Box_{k}\mathscr{A} \pd_{x} f}_{L_{x}^{p+2/3} L_{t}^{3p+2}} &\lesssim \norm{\Box_{k}f}_{L_{x}^{\brk{3p+2}'} L_{t}^{\brk{(3p+2)/(p+1)}'}}\\
		&= \norm{\Box_{k}f}_{L_{x}^{\frac{3p+2}{3p+1}} L_{t}^{(3p+2)/(2p+1)}}. 
	\end{align*}
\end{prop}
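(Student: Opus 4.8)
The plan is to first establish the estimate with the retarded integral $\int_{0}^{t}$ replaced by the full-line integral $\int_{\real}$, and then recover the retarded version via the Christ--Kiselev lemma (Lemma \ref{lem-christ}). Writing $\Box_{k}\mathscr{A}\pd_{x}f = \int_{0}^{t}\Box_{k}W(t-s)\pd_{x}f(s)\,ds$, I would introduce the two operators coming from Proposition \ref{prop-homo}: let $T_{1}u = \Box_{k}W(t)u$, which by the first estimate maps $L^{2}$ into $L_{x}^{p+2/3}L_{t}^{3p+2}$, and let $T_{2}u = \Box_{k}W(t)\pd_{x}u$, which by the second estimate maps $L^{2}$ into $L_{x}^{3p+2}L_{t}^{(3p+2)/(p+1)}$. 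The key structural observation is that the target space of $T_{2}$ is exactly the dual of the space $L_{x}^{(3p+2)'}L_{t}^{((3p+2)/(p+1))'}$ appearing on the right-hand side of the claim; this is what forces the derivative $\pd_{x}$ to be placed on the $T_{2}$ factor.

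Next I would run the $TT^{*}$ argument. Since $W(t)$ is an $L^{2}$-unitary group with $W(t)^{*}=W(-t)$, and since $\Box_{k}$, $\pd_{x}$ and $W$ are commuting Fourier multipliers, a direct computation gives $T_{2}^{*}G = -\pd_{x}\int_{\real}W(-s)\Box_{k}G(s)\,ds$, mapping $L_{x}^{(3p+2)'}L_{t}^{((3p+2)/(p+1))'}$ into $L^{2}$. Composing, $T_{1}T_{2}^{*}G = -\int_{\real}\Box_{k}^{2}W(t-s)\pd_{x}G(s)\,ds$, which is exactly the full-line analogue of $\Box_{k}\mathscr{A}\pd_{x}$ (the extra factor $\Box_{k}$ being harmless, since it may be absorbed into a fattened localizer with finite overlap). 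The composition bound $\norm{T_{1}T_{2}^{*}}\leq\norm{T_{1}}\,\norm{T_{2}}\lesssim 1$ then yields the full-line estimate $\norm{\int_{\real}\Box_{k}W(t-s)\pd_{x}f\,ds}_{L_{x}^{p+2/3}L_{t}^{3p+2}}\lesssim\norm{\Box_{k}f}_{L_{x}^{(3p+2)'}L_{t}^{((3p+2)/(p+1))'}}$.

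Finally I would invoke Lemma \ref{lem-christ} to pass from $\int_{\real}$ to $\int_{0}^{t}$. With target indices $p_{1}=p+2/3$, $q_{1}=3p+2$ and source indices $p_{2}=(3p+2)'=(3p+2)/(3p+1)$, $q_{2}=((3p+2)/(p+1))'=(3p+2)/(2p+1)$, one checks the separation $p_{2}\vee q_{2}<p_{1}\wedge q_{1}$ required by the lemma: since $2p+1\leq 3p+1$ we have $p_{2}\vee q_{2}=(3p+2)/(2p+1)$, which for $p\geq 4$ is at most $14/9$, whereas $p_{1}\wedge q_{1}=p+2/3\geq 14/3$. Thus the hypothesis holds, and Lemma \ref{lem-christ} converts the full-line bound into the retarded bound, which is the assertion.

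The main obstacle is precisely this last step: the Christ--Kiselev lemma applies only under the strict gap $p_{2}\vee q_{2}<p_{1}\wedge q_{1}$, so verifying this inequality across the whole admissible range $p\geq 4$ is the crux, together with arranging the duality so that $\pd_{x}$ sits on the $T_{2}$ factor and its adjoint both reproduces the $\pd_{x}$ of $\mathscr{A}\pd_{x}$ and lands in the correct dual space $L_{x}^{(3p+2)'}L_{t}^{((3p+2)/(p+1))'}$. The remaining points — the unitarity computation identifying $T_{2}^{*}$ and the bookkeeping of $\Box_{k}$ versus $\Box_{k}^{2}$ — are routine.
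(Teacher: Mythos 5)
Your proposal is correct and takes essentially the same route as the paper: there, too, the Christ--Kiselev lemma reduces the retarded integral to the untruncated one, which is then bounded by duality — pairing against $g$ in the dual of the target space, applying Cauchy--Schwarz in $x$, and invoking the dual forms of both estimates in Proposition \ref{prop-homo} — which is precisely your $T_{1}T_{2}^{*}$ composition, with your ``fattened localizer'' bookkeeping appearing in the paper as the insertion of $\sum_{\abs{\ell}\leq 1}\Box_{k+\ell}$. Your explicit check of the gap condition $p_{2}\vee q_{2}<p_{1}\wedge q_{1}$ for all $p\geq 4$ is a detail the paper leaves implicit, but otherwise the two arguments coincide.
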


\begin{proof}
	By the Christ-Kiselev lemma (Lemma \ref{lem-christ}), we only need to prove the following estimate. 
	\begin{align*}
		\norm{  \Box_{k} \int W(t-s)\pd_{x}f(s) ds }_{L_{x}^{p+2/3} L_{t}^{3p+2}} \lesssim \norm{\Box_{k}f}_{L_{x}^{\brk{3p+2}'} L_{t}^{\brk{(3p+2)/(p+1)}'}}.
	\end{align*}
We prove this by duality. 
\begin{align*}
	&\norm{ \Box_{k} \int W(t-s)\pd_{x}f(s) ds }_{L_{x}^{p+2/3} L_{t}^{3p+2}} \\
	&= \sup_{\norm{g}_{L_{x}^{\brk{p+2/3}'} L_{t}^{\brk{3p+2}'}} \leq 1} \inner{\Box_{k}\int W(t-s)\pd_{x} f(s) ds, g(t,x)}_{t,x} \\
	&= \sup_{\norm{g}_{L_{x}^{\brk{p+2/3}'} L_{t}^{\brk{3p+2}'}} \leq 1} \inner{ \Box_{k}\int W(-s) \pd_{x}f(s) ds, \sum_{\abs{\ell}\leq 1} \Box_{k+\ell}\int W(-t) g(t) dt}_{x}\\
	&\leq \sup_{\norm{g}_{L_{x}^{\brk{p+2/3}'} L_{t}^{\brk{3p+2}'}} \leq 1}  \norm{ \Box_{k}\int W(-s) \pd_{x}f(s) ds }_{2} \sum_{\abs{\ell}\leq 1} \norm{\Box_{k+\ell}\int W(-t) g(t) dt}_{2}\\
	&\lesssim \norm{\Box_{k}f}_{L_{x}^{\brk{3p+2}'} L_{t}^{\brk{(3p+2)/(p+1)}'}},
\end{align*}
where the last inequality is the dual version of the estimate in Proposition \ref{prop-homo}.
\end{proof}


To obtain the estimates of $\Box_{j,k}W(t)u, \Box_{j,k} \mathscr{A} f$ from the estimates of $\Box_{k}W(t)u, \Box_{k}\mathscr{A} f$, we need the following scaling lemma.

\begin{lemma}
	\label{lem-scaling}
	Let $0<p,r,p_{1},r_{1},q\leq \infty, 0<\al<\infty,k\in \Z$. If we have \begin{align*}
		&\norm{D^{\al}\Box_{k} W(t) u}_{L_{x}^{p}L_{t}^{r}} \lesssim \norm{\Box_{k} u}_{q},\\
		&\norm{D^{\al} \Box_{k} \mathscr{A} f}_{L_{x}^{p}L_{t}^{r}} \lesssim \norm{\Box_{k} f}_{L_{x}^{p_{1}}L_{t}^{r_{1}}}.
	\end{align*}
Then for any $j\in \Z$, we have \begin{align*}
	&\norm{D^{\al}\Box_{j,k} W(t) u}_{L_{x}^{p}L_{t}^{r}} \lesssim 2^{j\de(p,r,q,\al)}\norm{\Box_{j,k} u}_{q},\\
	&\norm{D^{\al} \Box_{j,k} \mathscr{A} f}_{L_{x}^{p}L_{t}^{r}} \lesssim 2^{j\tau(p,r,p_{1},r_{1},\al)}\norm{\Box_{j,k} f}_{L_{x}^{p_{1}}L_{t}^{r_{1}}},
\end{align*}
where \begin{align*}
	\de(p,r,q,\al)&=\al -4/r-1/p+1/q,\\
	\tau(p,r,p_{1},r_{1},\al)&=\al -4 -4/r -1/p +4/r_{1} +1/p_{1}.
\end{align*} 
\end{lemma}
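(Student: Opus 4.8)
The plan is to deduce both $j$-scale estimates from the given $j=0$ (i.e.\ $\Box_{k}$) estimates by a single dilation adapted to the symbol $\xi^{4}$ of $W(t)$. Writing $(h)_{\lambda}(x)=h(\lambda x)$ for the spatial dilation (as in the proof of Proposition~\ref{prop-mpqmu-embed-lr}), the first observation is that $\Box_{j,k}$ commutes with the Fourier multipliers $W(t)$ and $\mathscr{A}$, so that $\Box_{j,k}W(t)u=W(t)\Box_{j,k}u$ and $\Box_{j,k}\mathscr{A}f=\mathscr{A}\Box_{j,k}f$; hence it suffices to estimate $W(t)$ and $\mathscr{A}$ acting on the single frequency block $\Box_{j,k}u$ (resp.\ $\Box_{j,k}f$), which is supported in the cube $2^{j}(k+[-3/4,3/4])$. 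Setting $g=(\Box_{j,k}u)_{2^{-j}}$ rescales this block to frequencies in $k+[-3/4,3/4]$, i.e.\ to a standard unit-cube block. The crux is the intertwining identity, valid precisely because the symbol is $\xi^{4}$,
\begin{align*}
	D^{\al}W(t)\,(g)_{2^{j}} = 2^{j\al}\,\bigl(D^{\al}W(2^{4j}t)\,g\bigr)_{2^{j}},
\end{align*}
obtained by the change of variables $\eta=2^{-j}\xi$ in the oscillatory integral defining $W(t)$: the spatial dilation by $2^{j}$ forces the companion time dilation $t\mapsto 2^{4j}t$, and $D^{\al}$ produces the factor $2^{j\al}$.

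For the homogeneous estimate I would then take the $L_{x}^{p}L_{t}^{r}$ norm of this identity. The space-time dilation $(\tau,y)\mapsto(2^{4j}\tau,2^{j}y)$ contributes a Jacobian factor $2^{-4j/r-j/p}$, so that $\norm{D^{\al}W(t)(g)_{2^{j}}}_{L_{x}^{p}L_{t}^{r}}=2^{j(\al-4/r-1/p)}\norm{D^{\al}W(t)g}_{L_{x}^{p}L_{t}^{r}}$. To apply the hypothesis I would write $g=\sum_{|\ell|\le1}\Box_{k+\ell}g$ (the only blocks meeting the support of $\hat g$), use the given estimate on each $\Box_{k+\ell}W(t)g$, and absorb the finitely many terms by the uniform boundedness of $\Box_{k+\ell}$ on $L^{q}$ to obtain $\norm{D^{\al}W(t)g}_{L_{x}^{p}L_{t}^{r}}\lesssim\norm{g}_{q}$. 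Since $\norm{g}_{q}=2^{j/q}\norm{\Box_{j,k}u}_{q}$, collecting the exponents gives exactly $\de=\al-4/r-1/p+1/q$.

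The nonhomogeneous estimate runs along the same lines, but with the space-time dilation applied to $F=\Box_{j,k}f$: writing $G=(F)_{2^{-j}}$ with $G(\tau,y)=F(2^{-4j}\tau,2^{-j}y)$, the change of variables $\tilde s=2^{4j}s$ in $\mathscr{A}=\int_{0}^{t}W(t-s)\,\cdot\,ds$ yields $\mathscr{A}(G)_{2^{j}}=2^{-4j}(\mathscr{A}G)_{2^{j}}$, the extra $2^{-4j}$ coming from the $ds$ integral. Hence $D^{\al}\mathscr{A}F=2^{j\al}2^{-4j}(D^{\al}\mathscr{A}G)_{2^{j}}$, and taking $L_{x}^{p}L_{t}^{r}$ norms (Jacobian $2^{-4j/r-j/p}$), applying the hypothesis to $G$ via the same neighbor-sum argument, and using $\norm{G}_{L_{x}^{p_{1}}L_{t}^{r_{1}}}=2^{4j/r_{1}+j/p_{1}}\norm{F}_{L_{x}^{p_{1}}L_{t}^{r_{1}}}$ assembles the exponent $\tau=\al-4-4/r-1/p+4/r_{1}+1/p_{1}$.

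The conceptual content is the single intertwining identity; what requires care is the bookkeeping, and I expect the main friction to be twofold. First, matching the time dilation to the fourth power $2^{4j}$ (rather than $2^{j}$ or $2^{2j}$) and tracking the resulting Jacobians separately in $x$ and $t$ for the four distinct exponents $p,r,p_{1},r_{1}$ in the Duhamel case — the extra $2^{-4j}$ from $\mathscr{A}$ is easy to drop. Second, the frequency-localization mismatch: the rescaled block $g$ is supported in a unit cube but is not literally equal to any single $\Box_{k}g$, so applying the hypothesis honestly requires the finite neighbor sum together with the uniform $L^{q}$ (resp.\ $L_{x}^{p_{1}}L_{t}^{r_{1}}$) boundedness of the $\Box_{k'}$. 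Neither point obstructs the argument, and since the dilation is invertible the conclusion holds for every $j\in\Z$, as stated.
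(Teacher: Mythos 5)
Your proof is correct and rests on the same mechanism as the paper's: the quartic scaling $(x,t)\mapsto(2^{j}x,2^{4j}t)$ together with the intertwining identity $D^{\al}W(t)(g)_{2^{j}}=2^{j\al}\bigl(D^{\al}W(2^{4j}t)g\bigr)_{2^{j}}$ (and its Duhamel analogue with the extra factor $2^{-4j}$ from the $ds$ integral), and your exponent bookkeeping for both $\de$ and $\tau$ checks out. The one place you diverge is the endgame. The paper rescales the \emph{full} datum, $f\mapsto f_{2^{-j}}$ with $f_{2^{-j}}(x,t)=f(2^{-j}x,2^{-4j}t)$, and uses the exact commutation identity $\Box_{k}\bigl(f_{2^{-j}}\bigr)=\bigl(\Box_{j,k}f\bigr)_{2^{-j}}$; in your notation this says that your $g$ is \emph{literally} equal to $\Box_{k}\bigl(u_{2^{-j}}\bigr)$, not merely a function with spectrum in a unit cube near $k$. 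Consequently the unit-scale hypothesis applies verbatim, for the single $k$ appearing in the statement, with no decomposition at all. You instead rescale the block $\Box_{j,k}u$ itself and then recover the localization by the neighbor sum $g=\sum_{|\ell|\leq 1}\Box_{k+\ell}g$ plus uniform boundedness of the $\Box_{k'}$. This works, but it costs two things: (i) you invoke the hypothesis at $k-1,k,k+1$, which is strictly more than the lemma assumes for the fixed $k$ (harmless in the application, where Propositions \ref{prop-homo} and \ref{prop-nonhomo} hold for all $k$ with uniform constants, but a mismatch with the statement as written); and (ii) the uniform boundedness of $\Box_{k'}$ on $L^{q}$, resp.\ $L_{x}^{p_{1}}L_{t}^{r_{1}}$, via Young's inequality requires those exponents to be at least $1$, whereas the lemma allows the full range $0<q,p_{1},r_{1}\leq\infty$; the identity-based proof is insensitive to this. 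So: same approach, but the paper's version of the final step is both sharper and simpler, and noticing $g=\Box_{k}\bigl(u_{2^{-j}}\bigr)$ would remove both caveats from your argument.
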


\begin{proof}
	The proof is based on scaling. For simplicity, we only give the proof of $\mathscr{A}f$.
	
	By definition of $D^{\al},\Box_{j,k},W(t)$ and change of variable, we have \begin{align*}
		D^{\al} \Box_{j,k} \mathscr{A} f (x,t) &= \int_{0}^{t} \FF \abs{\xi}^{\al} \sigma(2^{-j}\xi-k) e^{i(t-s)\abs{\xi}^{4}}\widehat{f}(\xi,s)(x)ds \\
		&= \int_{0}^{t} \FF 2^{j\al} \abs{\xi}^{\al} \sigma(\xi-k) e^{i4^{j}(t-s) \abs{\xi}^{4}} 2^{jd} \widehat{f}(2^{j}\xi,s)(2^{j}x)ds\\
		&= \int_{0}^{2^{4j}t} \FF 2^{j\al} \abs{\xi}^{\al} \sigma(\xi-k) e^{i(2^{4j}t-s) \abs{\xi}^{4}} 2^{jd}2^{-4j} \widehat{f}(2^{j}\xi,2^{-4j}s) (2^{j}x)ds\\
		&= 2^{j\brk{\al-4}} D^{\al} \Box_{k} \mathscr{A}f_{2^{-j}}(2^{j}x,2^{4j}t),
	\end{align*}
	where $f_{2^{-j}} (x,t) = f(2^{-j}x, 2^{-4j}t)$. Similarly, by change of variable, we have $\Box_{k}f_{2^{-j}} (x,t) = \Box_{j,k} f(2^{-j}x,2^{-4j}t)$. 
	Therefore, we have 
	\begin{align*}
		\norm{D^{\al} \Box_{j,k} \mathscr{A} f}_{L_{x}^{p}L_{t}^{r}} &= \norm{2^{j\brk{\al-4}} D^{\al} \Box_{k} \mathscr{A}f_{2^{-j}}(2^{j}x,2^{4j}t) }_{L_{x}^{p}L_{t}^{r}}\\
		&= 2^{j\brk{\al-4}}2^{-4j/r} 2^{-j/p} \norm{D^{\al} \Box_{k} \mathscr{A}f_{2^{-j}}}_{L_{x}^{p}L_{t}^{r}}\\
		&\lesssim 2^{j(\brk{\al-4-1/p-4/r})} \norm{\Box_{k} f_{2^{-j}}}_{L_{x}^{p_{1}}L_{t}^{r_{1}}}\\
		&= 2^{j\tau(p,r,p_{1},r_{1},\al)} \norm{\Box_{j,k}f}_{L_{x}^{p_{1}}L_{t}^{r_{1}}}.
	\end{align*}
\end{proof}

\begin{rem}
	Notice that when $j\leq 0, \de,\tau\geq 0$ or $j\geq 0, \de,\tau\leq 0$, the estimates above are better than the estimates of only taking place of $u$ by $\Box_{j,k}u$ in the conditions.
\end{rem}

By Propositions \ref{prop-homo}, \ref{prop-nonhomo} and the scaling lemma above, we have 
\begin{prop}
	\label{prop-boxjkWtAf}
	Let $p\geq 4, j\leq 0,k \in \Z $. We have \begin{align*}
		&\norm{\Box_{j,k}W(t)u}_{L_{x}^{p+2/3} L_{t}^{3p+2}} \lesssim2^{jA} \norm{\Box_{j,k}u}_{2},\\
		&\norm{\pd_{x} \Box_{j,k} \mathscr{A} f }_{L_{x}^{p+2/3} L_{t}^{3p+2}} \lesssim 2^{jB} \norm{\Box_{j,k}f}_{L_{x}^{\brk{3p+2}'} L_{t}^{\brk{(3p+2)/(p+1)}'}},
	\end{align*}
where $A= \frac{3(p-4)}{2(3p+2)}, B= \frac{2(p-4)}{3p+2}$.
\end{prop}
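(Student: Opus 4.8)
The plan is to obtain both estimates by feeding the single-block ($\Box_{k}$) bounds already established in Propositions \ref{prop-homo} and \ref{prop-nonhomo} into the scaling Lemma \ref{lem-scaling}, and then checking that the resulting scaling exponents $\delta$ and $\tau$ simplify to the stated constants $A$ and $B$. Since $j\leq 0$ and $A,B\geq 0$ for $p\geq 4$, the factors $2^{jA},2^{jB}$ are at most $1$, so these are genuine gains over the naive block estimate, exactly the situation flagged in the remark following Lemma \ref{lem-scaling}.

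For the first (homogeneous) estimate I would apply Lemma \ref{lem-scaling} to the first bound of Proposition \ref{prop-homo}, namely $\norm{\Box_{k}W(t)u}_{L_{x}^{p+2/3}L_{t}^{3p+2}}\lesssim\norm{\Box_{k}u}_{2}$. This is of the required form with no derivative, i.e. with $\al=0$, spatial exponent $p+2/3$, temporal exponent $3p+2$, and $q=2$. The lemma then yields the factor $2^{j\delta}$ with $\delta=-4/(3p+2)-1/(p+2/3)+1/2$. Using $p+2/3=(3p+2)/3$ so that $1/(p+2/3)=3/(3p+2)$, this collapses to
\begin{align*}
	\delta=\frac12-\frac{7}{3p+2}=\frac{3p-12}{2(3p+2)}=\frac{3(p-4)}{2(3p+2)}=A,
\end{align*}
as claimed. (The hypothesis $0<\al<\infty$ in Lemma \ref{lem-scaling} is only cosmetic here: the dilation identity in its proof holds verbatim for $\al=0$, where $D^{0}$ is the identity.)

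For the second (nonhomogeneous) estimate I would first note that $\pd_{x}$ is a Fourier multiplier in $x$ and hence commutes with $\Box_{j,k}$ and with $W(t-s)$, so $\pd_{x}\Box_{j,k}\mathscr{A}f=\Box_{j,k}\mathscr{A}\pd_{x}f$; thus Proposition \ref{prop-nonhomo} supplies precisely the $\Box_{k}$ input bound required by Lemma \ref{lem-scaling}, with output exponents $(p+2/3,3p+2)$ and input exponents $p_{1}=(3p+2)'=(3p+2)/(3p+1)$, $r_{1}=((3p+2)/(p+1))'=(3p+2)/(2p+1)$. The symbol $i\xi$ of $\pd_{x}$ is homogeneous of degree one, exactly like that of $D^{1}$, so the scaling identity in the proof of Lemma \ref{lem-scaling} goes through with effective $\al=1$. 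Applying the lemma then gives the factor $2^{j\tau}$ with
\begin{align*}
	\tau&=1-4-\frac{4}{3p+2}-\frac{3}{3p+2}+\frac{4(2p+1)}{3p+2}+\frac{3p+1}{3p+2}\\
	&=-3+\frac{11p-2}{3p+2}=\frac{2p-8}{3p+2}=\frac{2(p-4)}{3p+2}=B,
\end{align*}
which is the desired constant.

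The only genuinely delicate point is the bookkeeping of the derivative: Lemma \ref{lem-scaling} is phrased for the fractional operator $D^{\al}$, whereas the proposition involves $\pd_{x}$. I expect this to be the main (if minor) obstacle, resolved by the observation that $\pd_{x}$ and $D^{1}$ scale identically under dilations, so one may either invoke the lemma with $\al=1$ or rerun its one-line scaling computation with $\pd_{x}$ in place of $D^{\al}$. Everything else is the routine arithmetic simplification of $\delta$ and $\tau$ recorded above, using $p+2/3=(3p+2)/3$ together with the explicit conjugate exponents $(3p+2)'=(3p+2)/(3p+1)$ and $((3p+2)/(p+1))'=(3p+2)/(2p+1)$.
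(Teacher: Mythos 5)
Your proposal is correct and takes exactly the paper's route: the paper obtains Proposition \ref{prop-boxjkWtAf} precisely by feeding the single-block bounds of Propositions \ref{prop-homo} and \ref{prop-nonhomo} into the scaling Lemma \ref{lem-scaling}, and your arithmetic $\delta=\frac{3(p-4)}{2(3p+2)}=A$ and $\tau=\frac{2(p-4)}{3p+2}=B$ is right. Your two side remarks --- that the lemma's dilation argument works verbatim for $\al=0$ despite the stated hypothesis $0<\al<\infty$, and that $\pd_x$ commutes with $\Box_{j,k}$, $W(t-s)$ and scales like $D^{1}$ --- supply exactly the details the paper leaves implicit.
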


Recall the work spaces $X^{\mu}(A)$ in Definition \ref{def-work-space}. By Proposition \ref{prop-boxjkWtAf}, we could obtain the main estimates of $W(t)$ as follows.
\begin{prop}
	\label{prop-linear-estimate}
	Let $p\geq 4, \mu \in \real, A= \frac{3(p-4)}{2(3p+2)}, B= \frac{2(p-4)}{3p+2}$. Then we have 
	\begin{align*}
	&\norm{W(t)u_{0}}_{X^{\mu}(L_{x}^{p+2/3} L_{t}^{3p+2})} \lesssim \norm{u_{0}}_{\mtwone^{\mu+A}};\\
	&\norm{\pd_{x} \mathscr{A}f}_{X^{\mu}(L_{x}^{p+2/3} L_{t}^{3p+2})}	 \lesssim \norm{f}_{X^{\mu+B}(L_{x}^{\brk{3p+2}'} L_{t}^{\brk{(3p+2)/(p+1)}'})}.
	\end{align*}
\end{prop}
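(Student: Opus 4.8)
The plan is to read off both estimates from the single-block bounds of Proposition \ref{prop-boxjkWtAf} by inserting them into the definitions of the two spaces involved, each of which is an infimum over admissible decompositions. The only mechanism I will use is the following: apply the linear operator ($W(t)$ for the first estimate, $\pd_x\mathscr{A}$ for the second) separately to each piece $f_j$ of a near-optimal decomposition of the datum, estimate each piece by the matching block bound, and reassemble. The dyadic gains $2^{jA}$ and $2^{jB}$ produced by the block estimates are exactly what is needed to raise the regularity index from $\mu$ to $\mu+A$ (resp.\ $\mu+B$) in the summation weight $2^{j\mu}$.

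For the first estimate I would fix an arbitrary decomposition $u_0=\sum_{j\leq 0}u_{0,j}$ with $u_{0,j}\in M_{2,1}^{[j]}$. Since $W(t)$ is a Fourier multiplier it is continuous on $\sch'$, so $W(t)u_0=\sum_{j\leq 0}W(t)u_{0,j}$ is a legitimate decomposition of $W(t)u_0$ in the sense of Definition \ref{def-work-space}, with $j$-th piece $W(t)u_{0,j}$. Because the $X^{\mu}$ norm is the infimum over \emph{all} decompositions, this one furnishes an upper bound, and the first bound of Proposition \ref{prop-boxjkWtAf} then gives
\begin{align*}
\norm{W(t)u_0}_{X^{\mu}(L_x^{p+2/3}L_t^{3p+2})}
&\leq \sum_{j\leq 0}2^{j\mu}\sum_{k\in\Z}\norm{\Box_{j,k}W(t)u_{0,j}}_{L_x^{p+2/3}L_t^{3p+2}}\\
&\lsm \sum_{j\leq 0}2^{j(\mu+A)}\sum_{k\in\Z}\norm{\Box_{j,k}u_{0,j}}_{2}
=\sum_{j\leq 0}2^{j(\mu+A)}\norm{u_{0,j}}_{M_{2,1}^{[j]}}.
\end{align*}
Taking the infimum over all decompositions of $u_0$ converts the right-hand side into $\norm{u_0}_{\mtwone^{\mu+A}}$, which is the first claim. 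The second estimate is the same argument with $W(t)$ replaced by $\pd_x\mathscr{A}$: starting from an arbitrary decomposition $f=\sum_{j\leq 0}f_j$ admissible for $X^{\mu+B}(L_x^{(3p+2)'}L_t^{((3p+2)/(p+1))'})$, linearity and continuity of $\mathscr{A}$ and $\pd_x$ on space-time distributions give $\pd_x\mathscr{A}f=\sum_{j\leq 0}\pd_x\mathscr{A}f_j$, and since $\pd_x$ commutes with $\Box_{j,k}$ the second bound of Proposition \ref{prop-boxjkWtAf} yields
\begin{align*}
\norm{\pd_x\mathscr{A}f}_{X^{\mu}(L_x^{p+2/3}L_t^{3p+2})}
&\leq \sum_{j\leq 0}2^{j\mu}\sum_{k\in\Z}\norm{\Box_{j,k}\pd_x\mathscr{A}f_j}_{L_x^{p+2/3}L_t^{3p+2}}\\
&\lsm \sum_{j\leq 0}2^{j(\mu+B)}\sum_{k\in\Z}\norm{\Box_{j,k}f_j}_{L_x^{(3p+2)'}L_t^{((3p+2)/(p+1))'}},
\end{align*}
so that passing to the infimum over decompositions of $f$ produces $\norm{f}_{X^{\mu+B}(L_x^{(3p+2)'}L_t^{((3p+2)/(p+1))'})}$.

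The honest remark is that there is essentially no analytic difficulty left at this stage. All of the dispersive content — the smoothing and maximal estimates of Lemma \ref{lem-box-smooth-max}, their interpolation in Proposition \ref{prop-homo}, the duality for the Duhamel term in Proposition \ref{prop-nonhomo}, and the scaling producing the gains $2^{jA},2^{jB}$ in Proposition \ref{prop-boxjkWtAf} — has already been carried out. This proposition is a repackaging statement, so the only points deserving care are bookkeeping: verifying that applying the operators term-by-term yields a genuine element of the target $X^{\mu}$ space (justified by continuity of the multiplier $W(t)$ and of $\mathscr{A}$ on $\sch'$), and checking that the block gains combine cleanly with the weight $2^{j\mu}$ so that the index is shifted to $\mu+A$ and $\mu+B$ respectively and the two infima align in the correct direction.
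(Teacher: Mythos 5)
Your proof is correct and is precisely the argument the paper intends: the paper derives this proposition directly from Proposition \ref{prop-boxjkWtAf} by applying the block estimates term-by-term to a near-optimal decomposition and summing the weights $2^{j\mu}\cdot 2^{jA}$ (resp.\ $2^{jB}$) into the shifted indices $\mu+A$ and $\mu+B$. Your write-up simply makes explicit the bookkeeping (term-by-term application, commutation of $\pd_x$ with $\Box_{j,k}$, and the direction of the two infima) that the paper leaves implicit.
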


\vskip 1.5cm
\section{Nonlinear estimates} \label{sec:nonlinear}

In this section, we give the nonlinear estimate of the work space $X^{\mu}(L_{x}^{p} L_{t}^{\ga})$. First, we need two lemmas.

\begin{lemma}
	\label{lem-Xl1-eembedding}
	Let $\ell \leq j \leq 0, 1\leq p,\ga \leq \infty$. Then we have 
	\begin{align*}
		\sum_{k\in \Z} \norm{\Box_{j,k} u}_{L_{x}^{p} L_{t}^{\ga}} \lesssim \sum_{k\in \Z} \norm{\Box_{\ell,k} u}_{L_{x}^{p} L_{t}^{\ga}}. 
	\end{align*}
\end{lemma}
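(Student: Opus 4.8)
The statement to prove is Lemma~\ref{lem-Xl1-eembedding}: for $\ell \leq j \leq 0$ and $1 \leq p, \ga \leq \infty$,
\[
\sum_{k\in \Z} \norm{\Box_{j,k} u}_{L_{x}^{p} L_{t}^{\ga}} \lesssim \sum_{k\in \Z} \norm{\Box_{\ell,k} u}_{L_{x}^{p} L_{t}^{\ga}}.
\]

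The plan is to exploit the fact that the coarser decomposition at level $\ell$ has frequency cubes of side $2^\ell$, which are \emph{wider} than the cubes of side $2^j$ at level $j$ (since $\ell \leq j$). Thus each fine cube $\Box_{j,k}$ overlaps only a controlled number of coarse cubes $\Box_{\ell,k'}$, and—more usefully for the reverse direction needed here—the key point is that $\Box_{j,k}$ can be recovered from finitely many of the coarse pieces via a Fourier multiplier with a uniformly bounded $L^1$ kernel.

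First I would record the geometry of the overlaps. The symbol $\sigma_{j,k}$ is supported in the cube centered at $2^j k$ of side roughly $2^j$, while $\sigma_{\ell,k'}$ is supported in the cube centered at $2^\ell k'$ of side roughly $2^\ell$. Since $2^\ell \geq 2^j$, for each fixed $k$ the support of $\sigma_{j,k}$ meets the supports of $\sigma_{\ell,k'}$ for only $O(1)$ values of $k'$ (independently of $j,\ell,k$, because a cube of the smaller scale $2^j$ can intersect at most a bounded number of cubes of the larger scale $2^\ell$). Using the partition of unity $\sum_{k'} \sigma_{\ell,k'} \equiv 1$, I would write
\[
\Box_{j,k} u = \Box_{j,k}\Big(\sum_{k'} \Box_{\ell,k'} u\Big) = \sum_{k' \in S(k)} \Box_{j,k} \Box_{\ell,k'} u,
\]
where $S(k)$ is the $O(1)$-size index set of coarse cubes meeting the support of $\sigma_{j,k}$.

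Next I would estimate each term $\norm{\Box_{j,k}\Box_{\ell,k'} u}_{L_x^p L_t^\ga}$ by $\norm{\Box_{\ell,k'} u}_{L_x^p L_t^\ga}$. The operator $\Box_{j,k}$ is convolution in $x$ with the kernel $\FF \sigma_{j,k}$, whose $L^1_x$ norm is scale-invariant: since $\sigma_{j,k}(\xi) = \sigma(2^{-j}\xi - k)$, a change of variables shows $\norm{\FF \sigma_{j,k}}_{L^1_x}$ equals a constant independent of $j,k$ (the modulation factor $e^{i k \cdot}$ and the $L^1$-preserving dilation contribute nothing to the $L^1$ norm). By Minkowski's/Young's inequality in $x$ applied inside the $L^\ga_t$ norm, convolution with an $L^1_x$ kernel of bounded norm is bounded uniformly on $L_x^p L_t^\ga$, giving $\norm{\Box_{j,k}\Box_{\ell,k'} u}_{L_x^p L_t^\ga} \lesssim \norm{\Box_{\ell,k'} u}_{L_x^p L_t^\ga}$. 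Summing over $k' \in S(k)$ (finitely many terms) and then over $k$, and using that each coarse index $k'$ is hit by only $O(1)$ fine indices $k$ (the dual bounded-overlap fact), I would interchange the order of summation to bound $\sum_k \sum_{k' \in S(k)} \norm{\Box_{\ell,k'} u}_{L_x^p L_t^\ga} \lesssim \sum_{k'} \norm{\Box_{\ell,k'} u}_{L_x^p L_t^\ga}$, which is the desired right-hand side.

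\textbf{Main obstacle.} The delicate point is confirming that the overlap counts are bounded \emph{uniformly} in $j$ and $\ell$ (not just for fixed scales), and correctly tracking the two dual counting bounds—how many coarse cubes a fine cube meets, and how many fine cubes meet a given coarse cube—since only the combination of both gives a constant independent of $j, \ell$. I expect the rest (the scale-invariance of the kernel $L^1$ norm and the Young/Minkowski boundedness on mixed-norm $L_x^p L_t^\ga$) to be routine.
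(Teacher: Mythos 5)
Your displayed chain of inequalities is structurally the same as the paper's proof: the paper decomposes $\Box_{j,k_1}u=\sum_{k'\in \wedge_{k_1,j}}\Box_{\ell,k'}\Box_{j,k_1}u$ (its sets $\wedge_{k_1,j}$ are exactly your $S(k)$), uses the uniform boundedness of the multipliers, and then interchanges the two sums using a bounded-overlap count. However, your geometric setup is backwards, and this is a genuine flaw in the justification. Since $\ell\leq j\leq 0$ we have $2^{\ell}\leq 2^{j}$: the level-$\ell$ cubes (side $\sim 2^{\ell}$) are the \emph{narrower} ones, not the wider ones as you claim. Consequently your assertion that $S(k)$ has $O(1)$ cardinality is false: the number of level-$\ell$ cubes meeting $\mathrm{supp}\,\sigma_{j,k}$ is of order $2^{j-\ell}$, which is unbounded as $\ell\to-\infty$.

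Fortunately, that assertion is never actually needed: the triangle inequality over $S(k)$ and the bound $\norm{\Box_{j,k}\Box_{\ell,k'}u}_{L_x^pL_t^{\gamma}}\lesssim\norm{\Box_{\ell,k'}u}_{L_x^pL_t^{\gamma}}$ (via the scale-invariant $L^1$ kernel bound, which you argue correctly) hold regardless of the size of $S(k)$. The only counting fact your final step needs is the dual one: each level-$\ell$ index $k'$ lies in $S(k)$ for at most $O(1)$ values of $k$. This is true, but for the reason \emph{opposite} to your stated geometry: a cube of the smaller side $2^{\ell}$ can meet only boundedly many cubes of the larger side $2^{j}$; quantitatively, if $k'\in S(k_1)\cap S(k_2)$ then $\abs{2^{j}k_1-2^{j}k_2}\leq 2^{j}+2^{\ell}\leq 2\cdot 2^{j}$, hence $\abs{k_1-k_2}\lesssim 1$, which is precisely the computation in the paper. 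Under the geometry you wrote ($2^{\ell}\geq 2^{j}$), this dual fact would be \emph{false} (a large cube meets $\sim 2^{\ell-j}$ small ones), so your proof as justified is internally inconsistent; your closing remark that the ``combination of both'' counting bounds is needed is also wrong, since the two bounds cannot both hold uniformly once $j-\ell$ is large, and only the dual one is needed and available. The fix is a one-line correction of which scale is finer; with it, your argument coincides with the paper's.
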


\begin{proof}
	For any $k_{1}\in \Z,$ denote $\wedge_{k_{1},j}= \set{k\in \Z: \Box_{\ell,k} \Box_{j,k_{1}} \neq 0}$. Then we know that \begin{align*}
		\Box_{j,k_{1}} u = \sum_{k\in \wedge_{k_{1},j}} \Box_{\ell,k} \Box_{j,k_{1}} u.
	\end{align*}
Meanwhile, $\set{\wedge_{k_{1},j}}_{k_{1}\in \Z}$ are finitely overlapped. In fact, if there exists $k\in \wedge_{k_{1},j} \cap \wedge_{k_{2},j}$, then we have $\abs{2^{j}k_{1} - 2^{j} k_{2}} \leq 2^{j}+2^{\ell}$, which means that $\abs{k_{1}-k_{2}} \lesssim 1$.

Therefore, by the triangular inequality, we have 
\begin{align*}
	\sum_{k_{1}\in \Z}\norm{\Box_{j,k_{1}} u}_{L_{x}^{p} L_{t}^{\ga}} &\leq \sum_{k_{1}\in \Z} \sum_{k\in \wedge_{k_{1},j}} \norm{\Box_{\ell,k} \Box_{j,k_{1}} u}_{L_{x}^{p} L_{t}^{\ga}} \\
	&\lesssim \sum_{k_{1}\in \Z} \sum_{k\in \wedge_{k_{1},j}} \norm{\Box_{\ell,k}u}_{L_{x}^{p} L_{t}^{\ga}}\lesssim \sum_{k\in \Z} \norm{\Box_{\ell,k}}_{L_{x}^{p} L_{t}^{\ga}}.
\end{align*}
\end{proof}

\begin{lemma}
	\label{lem-Xmu-embedding}
	Let $1\leq p_{1} \leq \ga\leq p\leq \infty,\mu\in \real$. Then we have $$X^{\mu+1/p_{1}}(L_{x}^{p_{1}} L_{t}^{\ga}) \hookrightarrow  X^{\mu+1/p} (L_{x}^{p} L_{t}^{\ga}) .$$
\end{lemma}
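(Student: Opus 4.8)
The plan is to reduce the embedding to a single-block Bernstein-type inequality in the mixed space-time norm, and then sum the blocks against the weights. Let $u=\sum_{j\le 0}u_j$ be any admissible decomposition, i.e. one with $\sum_{j\le 0}2^{j(\mu+1/p_1)}\sum_{k\in\Z}\norm{\Box_{j,k}u_j}_{L_x^{p_1}L_t^\ga}<\infty$, and use the \emph{same} decomposition to bound the $X^{\mu+1/p}(L_x^p L_t^\ga)$ norm. Since both norms are assembled from the blocks $\Box_{j,k}u_j$, it suffices to establish the uniform per-block estimate
\[
\norm{\Box_{j,k}u_j}_{L_x^p L_t^\ga}\lesssim 2^{j(1/p_1-1/p)}\norm{\Box_{j,k}u_j}_{L_x^{p_1}L_t^\ga},\qquad j\le 0,\ k\in\Z .
\]
Granting this, the weight $2^{j(\mu+1/p)}$ from the definition of the $X^{\mu+1/p}$ norm times the gain $2^{j(1/p_1-1/p)}$ collapses to exactly $2^{j(\mu+1/p_1)}$, so
\[
\sum_{j\le 0}2^{j(\mu+1/p)}\sum_{k\in\Z}\norm{\Box_{j,k}u_j}_{L_x^p L_t^\ga}\lesssim\sum_{j\le 0}2^{j(\mu+1/p_1)}\sum_{k\in\Z}\norm{\Box_{j,k}u_j}_{L_x^{p_1}L_t^\ga};
\]
taking the infimum over all decompositions then gives the claimed inclusion.

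The main difficulty is that Lemma \ref{lem-bernstein} is stated for pure $L^p$ functions with compactly supported Fourier transform, whereas here the frequency localization of $\Box_{j,k}u_j$ is only in the $x$-variable while the outer norm is the mixed $L_x^pL_t^\ga$; thus Lemma \ref{lem-bernstein} cannot be quoted directly, and I would reprove Bernstein by hand with a fattened multiplier. Pick $\wt\sigma\in\sch$ equal to $1$ on $[-3/4,3/4]^d$ and supported near it, set $\wt\sigma_{j,k}(\xi)=\wt\sigma(2^{-j}\xi-k)$ and $\wt\Box_{j,k}=\FF\wt\sigma_{j,k}\F$, so that $\Box_{j,k}u_j=\wt\Box_{j,k}\Box_{j,k}u_j$. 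Writing $v=\Box_{j,k}u_j$ and $K_{j,k}=\FF\wt\sigma_{j,k}$, we have $v(x,\cdot)=\int_{\real}K_{j,k}(x-y)\,v(y,\cdot)\,dy$ for each $t$. Minkowski's integral inequality in $t$ followed by Young's convolution inequality in $x$, with $1/r=1-1/p_1+1/p$ (so that $r\ge 1$ precisely because $p_1\le p$), gives
\[
\norm{v}_{L_x^pL_t^\ga}\le\norm{K_{j,k}}_{L^r}\,\norm{v}_{L_x^{p_1}L_t^\ga}.
\]

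It then remains only to track the scaling of the kernel. Since $\wt\sigma_{j,k}$ is a fixed bump dilated by $2^j$ and translated in frequency, $K_{j,k}(x)=2^{jd}e^{i2^jk\cdot x}(\FF\wt\sigma)(2^jx)$, whence $\norm{K_{j,k}}_{L^r}=2^{jd(1-1/r)}\norm{\FF\wt\sigma}_{L^r}=2^{jd(1/p_1-1/p)}\norm{\FF\wt\sigma}_{L^r}$. With spatial dimension $d=1$ this is exactly the gain $2^{j(1/p_1-1/p)}$ demanded above, completing the per-block estimate and hence the lemma. Note that the argument uses only $p_1\le p$, which is guaranteed by the hypothesis $p_1\le\ga\le p$; the role of $\ga$ is merely that it is the common inner exponent, preserved untouched by Minkowski's inequality.
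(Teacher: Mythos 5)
Your proof is correct, and it reaches the key per-block inequality by a genuinely different mechanism than the paper. The paper does not reprove Bernstein's inequality in the mixed norm; instead it sandwiches the scalar Lemma \ref{lem-bernstein} between two applications of Minkowski's inequality: for each block it first swaps the norm order, $\norm{\Box_{j,k}u_j}_{L_x^{p}L_t^{\ga}} \leq \norm{\Box_{j,k}u_j}_{L_t^{\ga}L_x^{p}}$ (valid precisely because $\ga\leq p$), then applies Lemma \ref{lem-bernstein} in $x$ at each fixed $t$ to gain the factor $2^{j(1/p_{1}-1/p)}$, and finally swaps back, $\norm{\Box_{j,k}u_j}_{L_t^{\ga}L_x^{p_{1}}} \leq \norm{\Box_{j,k}u_j}_{L_x^{p_{1}}L_t^{\ga}}$ (valid precisely because $p_{1}\leq \ga$); the weights then recombine exactly as in your computation, and the infimum over decompositions is taken at the end just as you do. Consequently the paper's argument genuinely uses the full hypothesis $p_{1}\leq\ga\leq p$, whereas your argument --- a vector-valued Bernstein inequality proved from scratch via the reproducing multiplier $\wt{\Box}_{j,k}$, the triangle inequality for $L_t^{\ga}$-valued integrals, and Young's inequality in $x$ with the scaled kernel $K_{j,k}$ --- needs only $p_{1}\leq p$, so it proves a slightly stronger statement in which $\ga$ is an arbitrary inner exponent. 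What the paper's route buys is brevity, since Lemma \ref{lem-bernstein} is quoted as a black box; what your route buys is self-containedness (you are right that the lemma as stated applies to pure $L^{p}$ norms, so its mixed-norm use strictly speaking deserves the one-line justification the paper leaves implicit) together with the weaker hypothesis. Both proofs handle the decomposition and infimum bookkeeping identically, and your kernel scaling computation $\norm{K_{j,k}}_{L^{r}}=2^{jd(1/p_{1}-1/p)}\norm{\FF\wt{\sigma}}_{L^{r}}$, uniform in $k$, is exactly what is needed.
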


\begin{proof}
	For any $u=\sum_{j\leq 0} u_{j}$, by Definition \ref{def-work-space}, Minkowski's inequality and Bernstein's inequality (Lemma \ref{lem-bernstein}), we have 
	\begin{align*}
		\norm{u}_{X^{\mu+1/p}(L_{x}^{p} L_{t}^{\ga})} &\leq \sum_{j\leq 0} 2^{j(\mu+1/p)} \sum_{k\in \Z} \norm{\Box_{j,k} u}_{L_{x}^{p} L_{t}^{\ga}} \leq \sum_{j\leq 0} 2^{j(\mu+1/p)} \sum_{k\in \Z} \norm{\Box_{j,k} u}_{ L_{t}^{\ga}L_{x}^{p}} \\
		& \leq \sum_{j\leq 0} 2^{j(\mu+1/p_{1})} \sum_{k\in \Z} \norm{\Box_{j,k} u}_{ L_{t}^{\ga}L_{x}^{p_{1}}} \leq  \sum_{j\leq 0} 2^{j(\mu+1/p_{1})} \sum_{k\in \Z} \norm{\Box_{j,k} u}_{ L_{x}^{p_{1}}L_{t}^{\ga}}.
	\end{align*}
So, we have \begin{align*}
	\norm{u}_{X^{\mu+1/p}(L_{x}^{p} L_{t}^{\ga})} \leq \norm{u}_{X^{\mu+1/p_{1}}(L_{x}^{p_{1}} L_{t}^{\ga})},
\end{align*}
which is equivalent to $X^{\mu+1/p_{1}}(L_{x}^{p_{1}} L_{t}^{\ga}) \hookrightarrow  X^{\mu+1/p} (L_{x}^{p} L_{t}^{\ga}) $.
\end{proof}

\begin{prop}
	\label{prop-nonlinear-estimate-general}
	Let $1\leq p,p_{1},p_{2},\ga,\ga_{1},\ga_{2} \leq \infty, \mu \in \real$, satisfying \begin{align*}
		\frac{1}{p} = \rev{p_{1}}+\rev{p_{2}},\quad \rev{\ga} = \rev{\ga_{1}} +\rev{\ga_{2}}.
	\end{align*}
Then we have \begin{align*}
	\norm{uv}_{X^{\mu}(L_{x}^{p} L_{t}^{\ga})} \lesssim \norm{u}_{X^{\mu}(L_{x}^{p_{1}} L_{t}^{\ga_{1}})} \norm{v}_{X^{0}(L_{x}^{p_{2}} L_{t}^{\ga_{2}})} +\norm{u}_{X^{0}(L_{x}^{p_{1}} L_{t}^{\ga_{1}})} \norm{v}_{X^{\mu}(L_{x}^{p_{2}} L_{t}^{\ga_{2}})}.
\end{align*}
\end{prop}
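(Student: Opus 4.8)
The plan is to work directly from Definition \ref{def-work-space}: I fix near-optimal decompositions $u=\sum_{j\leq 0}u_j$ and $v=\sum_{\ell\leq 0}v_\ell$ realizing (up to a factor $1+\eps$) the norms $\|u\|_{X^\mu(L_x^{p_1}L_t^{\gamma_1})}$, $\|v\|_{X^0(L_x^{p_2}L_t^{\gamma_2})}$ (and symmetrically), and then exhibit an explicit admissible decomposition of $uv$. The natural bookkeeping is to group the product according to the \emph{coarser} of the two frequency scales: writing $v^{\leq m}=\sum_{\ell\leq m}v_\ell$ and $u^{<m}=\sum_{j<m}u_j$, one checks by splitting the index set $\{(j,\ell)\}$ into $\{\ell\leq j\}\cup\{j<\ell\}$ that
\[
uv=\sum_{m\leq 0}w_m,\qquad w_m:=u_m\,v^{\leq m}+u^{<m}\,v_m.
\]
Since a piece at scale $2^m$ times a piece at a finer scale $2^{\ell}$ with $\ell\leq m$ has spatial frequencies spread over a region of size $\sim 2^m$, it is natural to analyze $w_m$ at scale $m$, i.e.\ to control $\sum_{k}\|\Box_{m,k}w_m\|_{L_x^pL_t^\gamma}$ and then sum against $2^{m\mu}$.

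The key technical step is a same-scale bilinear estimate: for any $f,g$ and any $m$,
\[
\sum_{k}\|\Box_{m,k}(fg)\|_{L_x^pL_t^\gamma}\lesssim\Big(\sum_{k_1}\|\Box_{m,k_1}f\|_{L_x^{p_1}L_t^{\gamma_1}}\Big)\Big(\sum_{k_2}\|\Box_{m,k_2}g\|_{L_x^{p_2}L_t^{\gamma_2}}\Big).
\]
To prove this I decompose $f=\sum_{k_1}\Box_{m,k_1}f$, $g=\sum_{k_2}\Box_{m,k_2}g$ and use that $\Box_{m,k_1}f\cdot\Box_{m,k_2}g$ has spatial Fourier support in the cube centered at $2^m(k_1+k_2)$ of side $\sim 2^m$, so $\Box_{m,k}(\Box_{m,k_1}f\cdot\Box_{m,k_2}g)$ vanishes unless $|k-k_1-k_2|\lesssim 1$. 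This reduces the lattice sum over $k$ to an almost-diagonal (finitely overlapping) convolution of two $\ell^1$ sequences, which is again $\ell^1$. The individual terms are controlled by Hölder in space-time ($1/p=1/p_1+1/p_2$, $1/\gamma=1/\gamma_1+1/\gamma_2$) together with the uniform bound $\|\Box_{m,k}h\|_{L_x^pL_t^\gamma}\lesssim\|h\|_{L_x^pL_t^\gamma}$, which holds because the convolution kernel of $\Box_{m,k}$ has $L^1$-norm independent of $m,k$.

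It remains to assemble the two ingredients. Applying the same-scale estimate to $f=u_m$, $g=v^{\leq m}$ (and, for the second term of $w_m$, to $f=u^{<m}$, $g=v_m$) produces on the right an $\ell^1$-sum of $v^{\leq m}$ (resp.\ $u^{<m}$) taken at scale $m$; since these involve only finer scales $\ell\leq m$ (resp.\ $j<m$), Lemma \ref{lem-Xl1-eembedding} lets me replace $\sum_{k_2}\|\Box_{m,k_2}v_\ell\|_{L_x^{p_2}L_t^{\gamma_2}}$ by $\sum_{k_2}\|\Box_{\ell,k_2}v_\ell\|_{L_x^{p_2}L_t^{\gamma_2}}$, bringing each piece back to its own scale. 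Multiplying by $2^{m\mu}$ and summing in $m$, I place the weight on the scale-$m$ factor and use $\sum_{\ell\leq m}(\cdots)\leq\sum_{\ell}(\cdots)$ (all terms nonnegative) to decouple the double sum, obtaining $\big(\sum_m 2^{m\mu}\sum_{k_1}\|\Box_{m,k_1}u_m\|_{L_x^{p_1}L_t^{\gamma_1}}\big)\big(\sum_\ell\sum_{k_2}\|\Box_{\ell,k_2}v_\ell\|_{L_x^{p_2}L_t^{\gamma_2}}\big)$ from the first term and the mirror expression from the second. Taking the infimum over the chosen decompositions gives the claim.

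I expect the main obstacle to be organizational rather than analytic: getting the regrouping $w_m=u_m v^{\leq m}+u^{<m}v_m$ and the placement of the weight $2^{m\mu}$ exactly right, so that after invoking Lemma \ref{lem-Xl1-eembedding} the finer-scale factor is summed with no weight (landing in $X^0$) while the coarse-scale factor retains the full $2^{m\mu}$ (landing in $X^\mu$). The genuinely analytic content — the near-diagonal frequency localization and the uniform $L^p$-boundedness of $\Box_{m,k}$ — is standard modulation-space algebra and should cause no difficulty.
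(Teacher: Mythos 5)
Your proposal is correct and takes essentially the same approach as the paper's proof: your regrouping $w_m=u_m v^{\leq m}+u^{<m}v_m$ is precisely the paper's splitting of $uv$ over the index regions $\{\ell\leq j\}$ and $\{j<\ell\}$, your same-scale bilinear estimate is the paper's chain of estimates (near-diagonal support $\abs{k-k_1-k_2}\lesssim 1$, H\"older in mixed norms, uniform boundedness of $\Box_{m,k}$), and you return the finer-scale factor to its own scale via Lemma \ref{lem-Xl1-eembedding} while keeping the weight $2^{m\mu}$ on the coarse factor, exactly as the paper does. The only differences are presentational: you package the two sums into a single decomposition $\{w_m\}$ of $uv$ and you state explicitly the uniform $L_x^{p}L_t^{\gamma}$-bound for $\Box_{m,k}$, which the paper uses implicitly.
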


\begin{proof}
The proof is based on the proof of Lemma 8.8 in \cite{Sugimoto2021Scaling}. 

For any $u=\sum_{j\leq 0} u_{j}, v=\sum_{\ell\leq 0} v_{\ell}$, we have \begin{align*}
	uv= \sum_{j,\ell\leq 0} u_{j} v_{\ell} &= \sum_{j\leq 0} \sum_{\ell\leq j} u_{j}v_{\ell} + \sum_{\ell\leq 0} \sum_{j<\ell} u_{j}v_{\ell}\\
	&:= \rmnum{1} +\rmnum{2}.
\end{align*}
By the triangular inequality, we have \begin{align} \label{eq-uv-1+2}
	\norm{uv}_{X^{\mu}(L_{x}^{p} L_{t}^{\ga})}\leq \norm{\rmnum{1}}_{X^{\mu}(L_{x}^{p} L_{t}^{\ga})} +\norm{\rmnum{2}}_{X^{\mu}(L_{x}^{p} L_{t}^{\ga})}.
\end{align} 
By Definition \ref{def-work-space} and the triangular inequality, we have 
\begin{align}\label{eq-1}
	\norm{\rmnum{1}}_{X^{\mu}(L_{x}^{p} L_{t}^{\ga})} &\leq \sum_{j\leq 0} 2^{j\mu} \sum_{k\in \Z} \norm{\Box_{j,k} \brk{\sum_{\ell\leq j} u_{j}v_{\ell}}}_{L_{x}^{p} L_{t}^{\ga}} \notag\\
	& \leq \sum_{j\leq 0} 2^{j\mu}\sum_{\ell\leq j} \sum_{k\in \Z} \norm{\Box_{j,k}(u_{j}v_{\ell})}_{L_{x}^{p} L_{t}^{\ga}}
\end{align}

Take decomposition of $u_{j} = \sum_{k_{1}\in \Z} \Box_{j,k_{1}} u_{j}, v_{\ell} =\sum_{k_{2}\in \Z} \Box_{j,k_{2}} v\ell$, by the orthogonality of $\Box_{j,k}\brk{\Box_{j,k_{1}} u_{j} \Box_{j,k_{2}} v_{\ell}}$, we have 
\begin{align}\label{eq-2}
	\eqref{eq-1}&\leq \sum_{j\leq 0} 2^{j\mu}\sum_{\ell\leq j} \sum_{k\in \Z} \sum_{k_{1},k_{2}\in \Z }\norm{\Box_{j,k}(\Box_{j,k_{1}}u_{j} \Box_{j,k_{2}}v_{\ell})}_{L_{x}^{p} L_{t}^{\ga}} \notag \\
	&\leq \sum_{j\leq 0} 2^{j\mu}\sum_{\ell\leq j}  \sum_{k_{1},k_{2}\in \Z } \sum_{k:\abs{k-k_{1}-k_{2}} \lesssim 1}\norm{\Box_{j,k}(\Box_{j,k_{1}}u_{j} \Box_{j,k_{2}}v_{\ell})}_{L_{x}^{p} L_{t}^{\ga}}	\notag\\
	&\leq \sum_{j\leq 0} 2^{j\mu}\sum_{\ell\leq j}  \sum_{k_{1},k_{2}\in \Z } \norm{\Box_{j,k_{1}}u_{j} \Box_{j,k_{2}}v_{\ell}}_{L_{x}^{p} L_{t}^{\ga}}
\end{align}
By H\"older's inequality and Lemma \ref{lem-Xl1-eembedding}, we have 
\begin{align*}
	\eqref{eq-2} &\leq \sum_{j\leq 0} 2^{j\mu}\sum_{\ell\leq j}  \sum_{k_{1},k_{2}\in \Z } \norm{ \Box_{j,k_{1}}u_{j}}_{L_{x}^{p_{1}} L_{t}^{\ga_{1}}} \norm{ \Box_{j,k_{2}}v_{\ell}}_{L_{x}^{p_{2}} L_{t}^{\ga_{2}}}\\
	&= \sum_{j\leq 0} 2^{j\mu} \sum_{k_{1}\in \Z} \norm{\Box_{j,k_{1}} u_{j}}_{L_{x}^{p_{1}} L_{t}^{\ga_{1}}} \sum_{\ell\leq j} \sum_{k_{2}\in \Z} \norm{\Box_{j,k_{2}} v_{\ell}}_{L_{x}^{p_{2}} L_{t}^{\ga_{2}}}\\
	&\leq \sum_{j\leq 0} 2^{j\mu} \sum_{k_{1}\in \Z} \norm{\Box_{j,k_{1}} u_{j}}_{L_{x}^{p_{1}} L_{t}^{\ga_{1}}} \sum_{\ell\leq j} \sum_{k_{2}\in \Z} \norm{\Box_{\ell,k_{2}} v_{\ell}}_{L_{x}^{p_{2}} L_{t}^{\ga_{2}}}.
\end{align*}
Therefore, we have \begin{align*}
	\norm{\rmnum{1}}_{X^{\mu}(L_{x}^{p} L_{t}^{\ga})} \lesssim \norm{u}_{X^{\mu}(L_{x}^{p_{1}} L_{t}^{\ga_{1}})} \norm{v}_{X^{0}(L_{x}^{p_{2}} L_{t}^{\ga_{2}})}.
\end{align*}
By the same method, we also have \begin{align*}
	\norm{\rmnum{2}}_{X^{\mu}(L_{x}^{p} L_{t}^{\ga})} \lesssim \norm{u}_{X^{0}(L_{x}^{p_{1}} L_{t}^{\ga_{1}})} \norm{v}_{X^{\mu}(L_{x}^{p_{2}} L_{t}^{\ga_{2}})}.
\end{align*}
Combine these two estimates, we could get the estimate as desired in the proposition.
\end{proof}
By induction, we could obtain the corollary as follows. 
\begin{cor}
	\label{cor-nonlinear}
	Let $1\leq m\in \N, 1\leq p,p_{i},\ga,\ga_{i}\leq \infty$, for $i=1,\cdots,m+1$. If \begin{align*}
		\rev{p} = \sum_{i=1}^{m+1} \rev{p_{i}}, \quad \rev{\ga} = \sum_{i=1}^{m+1} \rev{\ga_{i}}.
	\end{align*} Then we have 
	\begin{align*}
		\norm{u^{m+1}}_{X^{0}(L_{x}^{p}L_{t}^{\ga})} \lesssim \prod _{i=1}^{m+1} \norm{u}_{X^{0}(L_{x}^{p_{i}}L_{t}^{\ga_{i}})}.
	\end{align*}
\end{cor}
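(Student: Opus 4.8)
The plan is to argue by induction on $m$, using Proposition \ref{prop-nonlinear-estimate-general} as the engine. The key observation is that at the weight $\mu=0$ the bilinear estimate of that proposition collapses into a single submultiplicative bound: taking $\mu=0$ there, both terms on the right-hand side coincide, so that whenever $1/p=1/p_{1}+1/p_{2}$ and $1/\ga=1/\ga_{1}+1/\ga_{2}$ with all exponents in $[1,\infty]$, one has
\[
\norm{uv}_{X^{0}(L_{x}^{p}L_{t}^{\ga})} \lesssim \norm{u}_{X^{0}(L_{x}^{p_{1}}L_{t}^{\ga_{1}})}\,\norm{v}_{X^{0}(L_{x}^{p_{2}}L_{t}^{\ga_{2}})}.
\]
The case $m=1$ of the corollary is exactly this estimate with $v=u$, so it serves as the base case.

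For the inductive step I would assume the claim for products of $m$ factors and prove it for $m+1$. Writing $u^{m+1}=u\cdot u^{m}$, I introduce the intermediate exponents $\tilde p,\tilde\ga$ defined by $1/\tilde p=\sum_{i=2}^{m+1}1/p_{i}$ and $1/\tilde\ga=\sum_{i=2}^{m+1}1/\ga_{i}$. Then $1/p=1/p_{1}+1/\tilde p$ and $1/\ga=1/\ga_{1}+1/\tilde\ga$, so the displayed bilinear bound gives
\[
\norm{u^{m+1}}_{X^{0}(L_{x}^{p}L_{t}^{\ga})} \lesssim \norm{u}_{X^{0}(L_{x}^{p_{1}}L_{t}^{\ga_{1}})}\,\norm{u^{m}}_{X^{0}(L_{x}^{\tilde p}L_{t}^{\tilde\ga})}.
\]
Applying the induction hypothesis to the second factor, with the splitting $1/\tilde p=\sum_{i=2}^{m+1}1/p_{i}$ and $1/\tilde\ga=\sum_{i=2}^{m+1}1/\ga_{i}$, bounds $\norm{u^{m}}_{X^{0}(L_{x}^{\tilde p}L_{t}^{\tilde\ga})}$ by $\prod_{i=2}^{m+1}\norm{u}_{X^{0}(L_{x}^{p_{i}}L_{t}^{\ga_{i}})}$, and multiplying through yields the desired product over $i=1,\dots,m+1$.

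The only point requiring care—and it is bookkeeping rather than a genuine obstacle—is checking that the intermediate indices $\tilde p,\tilde\ga$ are admissible, i.e. lie in $[1,\infty]$, at every stage so that Proposition \ref{prop-nonlinear-estimate-general} actually applies. This is immediate from the harmonic-sum constraints: since each $1/p_{i}\ge 0$ one has $1/\tilde p\ge 0$, hence $\tilde p\le\infty$; and since $1/\tilde p=\sum_{i=2}^{m+1}1/p_{i}\le\sum_{i=1}^{m+1}1/p_{i}=1/p\le 1$ one has $\tilde p\ge 1$, with the identical argument for $\tilde\ga$. With admissibility secured at each step of the recursion, the induction closes and the corollary follows.
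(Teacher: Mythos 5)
Your proof is correct and is exactly the argument the paper intends: the paper dispatches this corollary with the single line ``by induction'' from Proposition \ref{prop-nonlinear-estimate-general}, and your write-up fills in that induction precisely — the $\mu=0$ collapse of the bilinear estimate, the peeling off of one factor at a time, and the admissibility check on the intermediate exponents $\tilde p,\tilde\ga$. Nothing further is needed.
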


Applying this corollary to the work space $$X^{0}(L_{x}^{\brk{3p+2}'} L_{t}^{\brk{(3p+2)/(p+1)}'})=X^{0}(L_{x}^{(3p+2)/(3p+1)} L_{t}^{(3p+2)/(2p+1)}),$$ we could get  the estimate as follows.

\begin{prop}\label{prop-nonlinear-estimate}
	Let $m=2p \in \N,p \geq 4$. Then we have \begin{align*}
		\norm{u^{m+1}}_{X^{0}(L_{x}^{(3p+2)/(3p+1)} L_{t}^{(3p+2)/(2p+1)})} \lesssim \norm{u}_{X^{0}(L_{x}^{p+2/3}L_{t}^{3p+2})}^{m+1}.
	\end{align*}
\end{prop}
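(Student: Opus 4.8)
The plan is to run the $(m+1)$-linear Hölder machinery of Corollary \ref{cor-nonlinear} with every factor measured in $L_x^{p+2/3}L_t^{3p+2}$, and then to repair the single exponent mismatch that this produces. First I would record the dual exponents $(3p+2)' = (3p+2)/(3p+1)$ and $\big((3p+2)/(p+1)\big)' = (3p+2)/(2p+1)$, so that the target space is $X^0(L_x^{(3p+2)/(3p+1)}L_t^{(3p+2)/(2p+1)})$. Writing $m+1 = 2p+1$ from $m=2p$, the temporal side already fits exactly: the $2p+1$ copies of $L_t^{3p+2}$ combine under Hölder into $L_t^{(3p+2)/(2p+1)}$, since $(2p+1)/(3p+2)$ is the reciprocal of the target temporal exponent. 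So in time the Corollary applies verbatim.

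The spatial side does not fit, and this is the \emph{main obstacle}. The $2p+1$ copies of $L_x^{p+2/3}$ combine into $L_x^{(3p+2)/(6p+3)}$, whose exponent is strictly below the required $(3p+2)/(3p+1)$; equivalently the reciprocals overshoot by exactly one unit, $(6p+3)/(3p+2) - (3p+1)/(3p+2) = 1$. Thus the target demands strictly higher spatial integrability than raw Hölder delivers, and — because the target mixed norm has its spatial exponent below its temporal exponent — the transposition trick behind Lemma \ref{lem-Xmu-embedding} (which needs $\gamma \le p$) is not available here.

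To recover the missing integrability I would exploit the frequency localization, working cube by cube as in the proof of Proposition \ref{prop-nonlinear-estimate-general}. The pieces $\Box_{j,k}u$ have spatial Fourier support of diameter $\sim 2^j$, so Bernstein's inequality (Lemma \ref{lem-bernstein}) in the $x$-variable upgrades $L_x^{p+2/3}$ to any larger $L_x^q$. The delicate point is that the inner temporal norm must not obstruct this: since the frequency cutoff acts by convolution in $x$, one commutes it past the $L_t^{3p+2}$ norm by Minkowski's integral inequality and then applies Young's convolution inequality, obtaining $\norm{\Box_{j,k}u}_{L_x^q L_t^{3p+2}} \lesssim 2^{j(3/(3p+2) - 1/q)}\norm{\Box_{j,k}u}_{L_x^{p+2/3}L_t^{3p+2}}$ for every $q \ge p+2/3$, with no ordering constraint on $q$ relative to $3p+2$.

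Choosing the common exponent $q_*$ with $(2p+1)/q_* = (3p+1)/(3p+2)$, which indeed satisfies $q_* > p+2/3$, and upgrading all $2p+1$ factors to $L_x^{q_*}$, the Hölder product now lands exactly in $L_x^{(3p+2)/(3p+1)}L_t^{(3p+2)/(2p+1)}$, while the accumulated Bernstein weights multiply to $2^{j[(6p+3)/(3p+2)-(3p+1)/(3p+2)]} = 2^{j}$. Since each scale has $j \le 0$, this factor is $\le 1$ and may simply be discarded; running the scale summation exactly as for the Corollary (dominant scale isolated by the $\ell \le j$ ordering, coarser factors absorbed by Lemma \ref{lem-Xl1-eembedding}, and the remaining sum over $j \le 0$ taken against the unweighted $X^0$ sum) then yields $\norm{u}_{X^0(L_x^{p+2/3}L_t^{3p+2})}^{m+1}$ on the right. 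I expect all genuinely new content to be confined to the cube-level Bernstein upgrade; once that is in place the decomposition combinatorics are identical to those already carried out for Proposition \ref{prop-nonlinear-estimate-general} and Corollary \ref{cor-nonlinear}.
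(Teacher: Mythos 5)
Your proposal is correct, but it takes a genuinely different route from the paper. The paper never leaves the framework of Corollary~\ref{cor-nonlinear}: it applies it with \emph{non-uniform} exponents, putting $a$ factors in the native space $L_x^{p+2/3}L_t^{3p+2}$, $b$ factors in $L_x^{2(3p+2)}L_t^{3p+2}$, and the remaining $m+1-a-b$ factors in $L_x^{\infty}L_t^{3p+2}$, with $(a,b)=(n,2)$ or $(n,5)$ according to the parity of $m$; the two non-native norms are then absorbed into the native one via Lemma~\ref{lem-Xmu-embedding} and Remark~\ref{rm-Xmu-embed}. The reason for this bookkeeping is exactly the constraint you identified: Lemma~\ref{lem-Xmu-embedding} requires $p_1\le\gamma\le p$, so every upgraded spatial exponent must be at least $3p+2$; this rules out your single common exponent $q_*=(2p+1)(3p+2)/(3p+1)<3p+2$ and forces the parity-dependent split. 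You restore uniformity by proving a stronger, per-cube mixed-norm Bernstein inequality, $\norm{\Box_{j,k}u}_{L_x^{q}L_t^{\gamma}}\lesssim 2^{j(3/(3p+2)-1/q)}\norm{\Box_{j,k}u}_{L_x^{p+2/3}L_t^{\gamma}}$, valid with no ordering constraint between $q$ and $\gamma$, and your Minkowski--Young mechanism does prove it; the one detail worth making explicit is the reproducing identity $\Box_{j,k}u=\brk{\FF\tilde{\sigma}_{j,k}\F}\Box_{j,k}u$ with $\tilde{\sigma}$ a fattened bump equal to $1$ on $\supp\sigma$, so that Young's inequality produces the factor $\norm{\FF\tilde{\sigma}_{j,k}}_{L^{s}}\approx 2^{j(1-1/s)}$ rather than acting on $u$ itself. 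Your accounting is right: the accumulated weight over the $2p+1$ factors is exactly $2^{j}$, harmless for $j\le 0$, and the cube/scale combinatorics are indeed those of Proposition~\ref{prop-nonlinear-estimate-general} and Lemma~\ref{lem-Xl1-eembedding}. What your route buys: no parity case distinction, one clean exponent for all factors, and an effective strengthening of Lemma~\ref{lem-Xmu-embedding} (removal of the hypothesis $p_1\le\gamma\le p$) which could be of independent use. What it costs: you must prove one estimate that is not in the paper, whereas the paper's proof runs entirely on its already-established lemmas.
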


\begin{proof}
	Denote \begin{align*}
		(a,b)=\Cas{(n,2), & $m=2n\in 2\N$; \\
		(n,5), & $m=2n+1 \in 2\N +1$.}
	\end{align*}
Then we have \begin{align*}
	\frac{3p+1}{3p+2} &= \frac{3a}{3p+2} + \frac{b}{2(3p+2)} +\frac{m+1-a-b}{\infty};\\
	\frac{2p+1}{3p+2} &= \frac{a}{3p+2} +\frac{b}{3p+2} + \frac{m+1-a-b}{3p+2}.
\end{align*}
By Corollary \ref{cor-nonlinear}, we have 
\begin{align}
	\label{eq-nonlinear1}
	&\norm{u^{m+1}}_{X^{0}(L_{x}^{(3p+2)/(3p+1)} L_{t}^{(3p+2)/(2p+1)})} \notag\\
	&\lesssim \norm{u}_{X^{0}(L_{x}^{p+2/3}L_{t}^{3p+2})}^{a} \norm{u}_{X^{0}(L_{x}^{2(3p+2)}L_{t}^{3p+2})}^{b} \norm{u}_{X^{0}(L_{x}^{\infty}L_{t}^{3p+2})}^{m+1-a-b}.
\end{align}
By Lemma \ref{lem-Xmu-embedding} and Remark \ref{rm-Xmu-embed}, we have 
\begin{align*}
	&\norm{u}_{X^{0}(L_{x}^{2(3p+2)}L_{t}^{3p+2})} \lesssim \norm{u}_{X^{5/(6p+4)}(L_{x}^{p+2/3}L_{t}^{3p+2})} \lesssim \norm{u}_{X^{0}(L_{x}^{p+2/3}L_{t}^{3p+2})},\\
	&\norm{u}_{X^{0}(L_{x}^{\infty}L_{t}^{3p+2})} \lesssim \norm{u}_{X^{3/(3p+2)}(L_{x}^{p+2/3}L_{t}^{3p+2})} \lesssim \norm{u}_{X^{0}(L_{x}^{p+2/3}L_{t}^{3p+2})}.
\end{align*}
To take these estimates into \eqref{eq-nonlinear1}, we could obtain the nonlinear estimate as desired.

\end{proof}

\vskip 1.5cm
\section{Proof of Theorem \ref{thm-GWP}}\label{sec:proof}
\begin{proof}
  Denote $p=m/2, A= \frac{3(p-4)}{2(3p+2)}, B= \frac{2(p-4)}{3p+2} $, let $ 0\leq \mu \leq A$, for any $u_{0}\in \mtwonemu$, define the operator \begin{align*}
  \mathscr{T}: X^{\mu-A}(L_{x}^{p+2/3}L_{t}^{3p+2}) \ &\longrightarrow \ X^{\mu-A}(L_{x}^{p+2/3}L_{t}^{3p+2})\\
  u \ &\longrightarrow \ W(t)u_{0} + \pd_{x}\mathscr{A} u^{m+1}.
  \end{align*}
By Proposition \ref{prop-linear-estimate}, we have \begin{align*}
	\norm{\mathscr{T}u}_{X^{\mu-A}(L_{x}^{p+2/3}L_{t}^{3p+2}) } \lesssim \norm{u_{0}}_{X^{\mu}(L_{x}^{p+2/3}L_{t}^{3p+2}) } + \norm{u^{m+1}}_{X^{\mu-A+B}(L_{x}^{\brk{3p+2}'} L_{t}^{\brk{(3p+2)/(p+1)}'}) }.
\end{align*}
Notice that $\mu-A\leq 0, \mu-A+B\geq 0$, so by Remark \ref{rm-Xmu-embed} and Proposition \ref{prop-nonlinear-estimate}, we have \begin{align*}
	\norm{u^{m+1}}_{X^{\mu-A+B}(L_{x}^{\brk{3p+2}'} L_{t}^{\brk{(3p+2)/(p+1)}'}) } &\leq \norm{u^{m+1}}_{X^{0}(L_{x}^{\brk{3p+2}'} L_{t}^{\brk{(3p+2)/(p+1)}'}) } \\
	&\lesssim \norm{u}_{X^{0}(L_{x}^{p+2/3}L_{t}^{3p+2})}^{m+1} \\
	&\leq \norm{u}_{X^{\mu-A}(L_{x}^{p+2/3}L_{t}^{3p+2})}^{m+1}.
\end{align*}
Therefore, by the standard contraction mapping argument, we see that \eqref{eq-d4nls} has a unique solution $u\in X^{\mu-A}(L_{x}^{p+2/3}L_{t}^{3p+2})$. Moreover, since $W(t)$ is bounded on $\mtwonemu$ and $\mtwonemu$ is a multiplication algebra (Proposition 6.2 in \cite{Sugimoto2021Scaling}), it can be shown that $u\in C(\real,\mtwonemu)$. 
\end{proof}

\vskip 1.5cm
\section*{Acknowledgments}
I would like to thank my advisor Baoxiang Wang for inspiring ideas and discussions.

\bibliographystyle{acm}
\bibliography{scalingpde}
\newpage
%
%

\end{document}